\theoremstyle{definition}
\numberwithin{equation}{subsection} 
\newtheorem{guess}{theorem}[subsection]
\newtheorem{rem}[guess]{Remark}
\newtheorem{defi}[guess]{Definition}
\newtheorem{ex}[guess]{Example}
\newtheorem{thm}[guess]{Theorem}
\newtheorem{prop}[guess]{Proposition}
\newtheorem{Cor}[guess]{Corollary}
\newcommand{\gfr}{\mathfrak{g}}
\newcommand{\cO}{\mathcal{O}}
\newcommand{\cE}{\mathcal{E}}
\newcommand{\cG}{\mathcal{G}}
\newcommand{\cH}{\mathcal{H}}
\newcommand{\cM}{\mathcal{M}}
\newcommand{\cA}{\mathcal{A}}
\newcommand{\cR}{\mathcal{R}}
\newcommand{\cP}{\mathcal{P}}
\newcommand{\Sym}{\mathrm{Sym}}
\newcommand{\Spec}{\mathrm{Spec}}
\newcommand{\bsm}{\boldsymbol}
\newcommand{\lra}{\longrightarrow}
\newcommand{\ra}{\rightarrow}
\newcommand{\ol}{\overline}
\newcommand{\ms}{\mapsto}
\newcommand{\RR}{\mathbb{R}}
\newcommand{\PP}{\mathbb{P}}
\newcommand{\ZZ}{\mathbb{Z}}
\newcommand{\GG}{\mathbb{G}}
\newcommand{\CC}{\mathbb{C}}
\newcommand{\Hom}{\mathrm{Hom}}
\newcommand{\Id}{\mathrm{Id}}
\newcommand{\SL}{\mathrm{SL}}
\begin{document}

\title{On homomorphisms of $\pi_{1}(\mathbb P^1 - \cR)$ into compact semisimple groups}

\author{Vikraman Balaji}
\address{Chennai Mathematical Institute, H1 Sipcot IT Park, Siruseri Tamil Nadu 603103, India}
\email{balaji@cmi.ac.in}
\author[Y. Pandey]{Yashonidhi Pandey}
\thanks{The support of Science and Engineering Research Board under Mathematical Research Impact Centric Support File number: MTR/2017/000229 is gratefully acknowledged. We thank ICTS, Bangalore for excellent hospitality where this work was finally accomplished.}
\address{ 
Indian Institute of Science Education and Research, Mohali Knowledge city, Sector 81, SAS Nagar, Manauli PO 140306, India}
\email{ ypandey@iisermohali.ac.in, yashonidhipandey@yahoo.co.uk}

\begin{abstract} The aim of this paper is to give verifiable criteria for the existence of {\em irreducible} homomorphisms of $\pi_{1}(\mathbb P^1 - \cR)$ into compact semisimple groups, for a finite subset $\cR$ such that the conjugacy classes of the images of lassos around the marked points are fixed. By a theorem in \cite{bs}, this question reduces into one of giving verifiable criteria for the existence of stable $\cG$-torsors on $\mathbb{P}^1_{\mathbb{C}}$, where $\mathcal{G} \ra \mathbb{P}^1_\mathbb{C}$ is a  a Bruhat-Tits group scheme. \end{abstract}
\subjclass[2000]{14L15,14D23,14D20}
\keywords{Bruhat-Tits group scheme, parahoric group, Moduli stack, Gromov-Witten, stability}
\maketitle
\small
\tableofcontents
\normalsize


\section{Introduction}
Let $G$ be an almost simple simply connected algebraic group over $\CC$. Let $K_G$ denote its maximal compact subgroup. Let $\cR \subset \mathbb P^1$ be a finite subset of distinct points.  Recall that the fundamental group 
$\pi_{1}(\mathbb P^1 - \cR)$ is a free group on $s = \|\cR\|$-number of generators $\gamma_1, \ldots, \gamma_s$ such that $\gamma_1\ldots \gamma_s = 1$. Recall that a subset $H \subset K_G$ is called {\em irreducible} if the $\{Y \in \text{Lie}(G) \mid \text{ad} h(Y) = Y, \forall h \in H\} = \text{centre of} \, \text{Lie}(G) = 0$ and a homomorphism $\rho: \pi_{1}(\mathbb P^1 - \cR) \to K_G$ is called {\em irreducible} if the image $\rho(\pi_{1}(\mathbb P^1 - \cR)) \subset K_G$ is irreducible (see Ramanathan \cite{ramanathan}). For $i \geq 3$, let $\{C_i| 1 \leq i \leq s \}$ denote a prescribed set of conjugacy classes in $K_G$.  The aim of this paper is to give verifiable criteria for the existence of {\em irreducible} homomorphisms $\rho: \pi_{1}(\mathbb P^1 - \cR) \to K_G$, such the conjugacy class of $\rho(\gamma_j)$ lies in $C_j$.

 The multiplicative Horn problem asks whether there exists a set of lifts $\{c_i \in C_i \}$ satisfying $\prod c_i=1$. Teleman and Woodward \cite{tw} gave numerical criteria for this problem for arbitrary $G$, and earlier for $G= SU_n$ such a criteria was obtained independently by Agnihotri-Woodward \cite{agni} and P.Belkale \cite{belkale}.

Our question of the existence of an irreducible $\rho$ thus translates to the   Deligne-Simpson problem \cite{kostov2003}, which asks whether there exists a set of lifts $\{c_i \in C_i\}$ satisfying $\prod c_i=1$ {\em that also form an irreducible set}. It should be noted however that the general Deligne-Simpson problem asks the same question when the conjugacy classes are not necessarily constrained to take values in the maximal compact subgroup.

Recall that by Balaji-Seshadri \cite[Thm 8.1.7, Cor 8.1.8]{bs} the question of existence of such an irreducible set of lifts is equivalent to  the existence of a stable torsor under a suitable Bruhat-Tits group scheme on $\mathbb{P}^1_{\mathbb{C}}$. When $K_G=U_n$, this is classical and by Mehta-Seshadri \cite{ms} this problem is equivalent to the existence of a stable parabolic vector bundle on $\mathbb{P}^1_{\mathbb{C}}$.

From the perspective of the root system of $G$, we recall that conjugacy classes in $K_G$ are parametrized precisely by the points of the Weyl alcove ${\mathbf{a}_0}$ (cf \cite[Page 151]{morgan}). In this setting, our 
aim  is to give numerical verifiable criteria for the existence of such an irreducible set of lifts in terms of the points of the Weyl alcoves determined by the $\{C_i\}$. More precisely, let $\ol{\mathbf{a}_0}$ denote the points  of the {\it closed} Weyl  alcove. In this note we want to describe the stable polytope $\Delta^s \subset {\ol{\mathbf{a}_0}}^{|\cR|}$ i.e. the set of points ${\boldsymbol{\theta}}=\{\theta_x\}_{x \in \cR} \in \ol{\mathbf{a}_0}^{|\cR|}$ such that there exists a stable parahoric torsor on $\PP^1$ with weight ${\boldsymbol{\theta}}$. One defines the semistability polytope $\Delta^{ss}$ similarly.

Such criteria were  obtained by I.Biswas \cite{biswas} for $U_2$, by P.Belkale \cite{belkale} for $SU_n$ and by Y.Pandey \cite{me} for the  maximal compact subgroups of $SO_n(\CC)$ and $Sp_{2n}(\CC)$.

Returning to the setting of \cite{bs}, let $\cM(\cG)$ denote the moduli stack of $\cG$-torsors on $\PP^1_{\mathbb{C}}$ where $\cG$ is a Bruhat-Tits group scheme on $\PP^1_{\mathbb{C}}$ at a fixed set of marked points on $\PP^1_{\mathbb{C}}$ given by a choice of weights $\boldsymbol{\theta}$. Let $\cM(\cG^{^{\tt I}})$ be the moduli stack of torsors with Iwahori structures at these marked points. Recall that the points of $\cM(\cG^{^{\tt I}})$ can be viewed as principal $G$-bundles with parabolic structures given by the Borel $B$ at the markings, analogous to vector bundles with full-flag parabolic structures. Under this identification, the trivial $G$-bundle with $B$-structures can be seen as points of $\cM(\cG^{^{\tt I}})$.  These fit in together in the following Hecke-modification diagram  constructed in (\ref{Heckediag})
\begin{equation} \label{Heckediag}
\xymatrix{
& \cM_X(\cG^{{\tt I}}) \ar[ld]^{p} \ar[rd]^{q} & & \\
\cM_X(\cG) & &  \cM_X(G)
}
\end{equation}

Given a choice of weights $\boldsymbol\theta$ for $\cG$ we show how one can derive an extended set of weights $\boldsymbol\theta^{^{ext}}$ for $\cG^{{\tt I}}$ such that the stability condition for a $\cG$ torsor $\cE$ with weights $\boldsymbol\theta$  becomes equivalent to an intrinsic stability condition for all the $\cG^{{\tt I}}$ torsors with weights $\boldsymbol\theta^{^{ext}}$ sitting above $\cE$ under the map $p$. As we noted above, since these objects are now in $\cM(\cG^{^{\tt I}})$, the stability condition can be seen as   a $\boldsymbol\theta^{^{ext}}$-stability condition on the underlying parabolic $G$-bundle.  The technical heart of this paper is the following:
\begin{thm} The open sub-stack $\cM(\cG)^{^s}$ of $\cM(\cG)$ consisting of stable torsors is non-empty if and only if the trivial $G$-bundle with $B$-structures and weight $\boldsymbol\theta^{^{ext}}$ is stable as a point of $\cM(\cG^{^{\tt I}})$. 
\end{thm}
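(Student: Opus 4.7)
The proof proceeds through the two implications. The sufficiency direction is essentially a corollary of the extended weight construction: since $\boldsymbol\theta^{^{ext}}$ is defined so that a $\cG^{^{\tt I}}$-torsor with weight $\boldsymbol\theta^{^{ext}}$ is stable if and only if its image under $p$ is a stable $\cG$-torsor with weight $\boldsymbol\theta$, stability of the trivial $G$-bundle with $B$-structures as a point of $\cM(\cG^{^{\tt I}})$ immediately furnishes a stable $\cG$-torsor in $\cM(\cG)$, so $\cM(\cG)^{s}$ is non-empty.

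For the necessity direction, assume $\cM(\cG)^s \ne \emptyset$ and fix a stable $\cE$. Any lift $\tilde{\cE} \in p^{-1}(\cE) \subset \cM(\cG^{^{\tt I}})$ is stable with weight $\boldsymbol\theta^{^{ext}}$, so $\cM(\cG^{^{\tt I}})^s$ is non-empty and open. I then analyse this open substack via the projection $q : \cM(\cG^{^{\tt I}}) \to \cM(G)$, a smooth morphism with fibers isomorphic to the flag variety $(G/B)^{|\cR|}$. Since over $\PP^1_\CC$ the trivial $G$-bundle is the unique semistable $G$-bundle and its isomorphism class forms the open dense substack of $\cM(G)$, the image $q(\cM(\cG^{^{\tt I}})^s)$ is an open non-empty substack that must intersect the locus of the trivial $G$-bundle. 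This produces a stable $\cG^{^{\tt I}}$-torsor whose underlying $G$-bundle is trivial, corresponding to some tuple of flags $(g_1 B, \ldots, g_{|\cR|} B) \in (G/B)^{|\cR|}$.

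The final and most delicate step is to transfer stability from this tuple to the diagonal tuple $(eB, \ldots, eB)$ representing the trivial $B$-structures. A candidate destabilizing reduction of the trivial $G$-bundle to a standard parabolic $P \supset B$ is a morphism $\sigma : \PP^1 \to G/P$, whose parabolic degree with respect to $\boldsymbol\theta^{^{ext}}$ is an explicit sum of the degree of $\sigma$ together with contributions depending on the Schubert position of $\sigma(x)$ relative to the $B$-flag at each $x \in \cR$. I plan to carry out a Schubert-calculus computation using the explicit form of $\boldsymbol\theta^{^{ext}}$ to show that the extremal value of the parabolic degree over all such $\sigma$, which controls stability, evaluates to the same quantity at the diagonal tuple as at a generic tuple. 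Stability at one tuple then implies stability at the diagonal tuple.

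\textbf{Main obstacle.} The principal difficulty is precisely this transfer of stability in the direction from a generic to a special configuration of Borel flags, which is opposite to the direction provided by openness of the stable locus. Resolving it demands an explicit intersection-theoretic analysis on the space of maps $\PP^1 \to G/P$ — in spirit close to the Gromov--Witten and quantum cohomology arguments of \cite{belkale} and \cite{tw} — adapted to the parahoric setting and combined with the Hecke diagram (\ref{Heckediag}) to extract exactly the stability conditions dictated by $\boldsymbol\theta^{^{ext}}$.
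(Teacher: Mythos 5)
Your sufficiency direction and the first half of your necessity argument are correct and essentially the paper's proof in different clothing: the paper also reduces everything to Proposition \ref{stabilityExtendingweights} plus openness of stability plus the fact that the trivial bundle is the generic point of $\cM_X(G)$ on $\PP^1$ (it realizes the spreading-out by an explicit Ramanathan family $\mathbf{E}\ra \PP^1\times T$ with trivial generic member, pulled back to $T_1 = T\times_{\cM_X(G)}\cM_X(\cG^{^{\tt I}})$, rather than by openness of the smooth morphism $q$, but the two devices are interchangeable). The problem is your ``final and most delicate step.'' It rests on a misreading of the target: Theorem \ref{mt} asserts stability of the trivial $G$-bundle with \emph{generic} $B$-structures, not with the constant diagonal tuple $(eB,\dots,eB)$. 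Once your steps produce a single tuple of flags on the trivial bundle giving a stable point of $\cM_X(\cG^{^{\tt I}})$, you are done: the fiber of $q$ over the trivial bundle is (a quotient of) the irreducible variety $(G/B)^{|\cR|}$, the stable locus is open in it, and a non-empty open subset of an irreducible variety is dense — so the generic tuple is stable, which is exactly the assertion. No transfer to a special configuration is needed.

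Worse, the transfer you propose cannot succeed: the trivial bundle with the constant $B$-structure $(eB,\dots,eB)$ is never stable for weights of this adjoint type. Indeed, take the constant reductions to $B$ and to the opposite Borel $B^-$. The filtrations of $\cE(\gfr)_*$ at the marked points are $T$-stable, so parabolic degree is additive on the pair $(\mathfrak b,\mathfrak b^-)$ with $\mathfrak b+\mathfrak b^-=\gfr$ and $\mathfrak b\cap\mathfrak b^-=\mathfrak t$; since $\mathrm{pardeg}(\cE(\gfr)_*)=0$ and $\mathfrak t$ sits in weight $0$, the two constant reductions have parabolic degrees summing to $0$, so at least one is $\ge 0$ and stability fails. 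So the Schubert-calculus program you flag as the main obstacle is both unnecessary and unachievable; deleting it and inserting the one-line genericity argument above closes the proof. (Intersection-theoretic input of the kind you describe enters only later, in Corollary \ref{stablepolytope} and Theorem \ref{mt2}, to convert the statement about generic $B$-structures into checkable Gromov--Witten inequalities.)
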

The setting is as in \cite{tw}, and  
the sought-after criterion now gets translated (cf Corollary \ref{stablepolytope}) into one in terms of Gromov-Witten numbers as it happens in \cite{tw}.

Let $\cE^{\tt I}$ be the {\em trivial $G$-bundle} with parabolic structures of the full-flag type, i.e. $B$-structures at the marked points $\cR$. For a parabolic subgroup $P \subset G$, let $\cE^{\tt I}_{_P}$ be a reduction of structure group to $P$. We then have an inclusion of Lie algebra bundles $\cE^{\tt I}_{_P}(\mathfrak p) \subset \cE^{\tt I}(\gfr)$. Observe that the associated Lie algebra bundle $\cE^{\tt I}(\gfr)$ gets canonical parabolic structures at the marked points (these will not be full-flag types though). We denote this Lie algebra bundle with parabolic structures by $\cE^{\tt I}(\gfr)_*$. The sub-bundle $\cE^{\tt I}_{_P}(\mathfrak p)$ gets the canonical induced parabolic structures and we have similarly $\cE^{\tt I}_{_P}(\mathfrak p)_* \subset \cE^{\tt I}(\gfr)_*$. Since $G$ is semisimple, it is clear that $pardeg(\cE^{\tt I}(\gfr)_*) = 0$. 

{ Say a $P$-reduction $\cE^{\tt I}_{_P} \subset \cE^{\tt I}$ is {\underline {of the minus 1 type}} if the parabolic degree of the quotient $\cE^{\tt I}(\gfr)_*/\cE^{\tt I}_{_P}(\mathfrak p)_*$ is $0$ and further, the degree of the vector bundle underlying the quotient $\cE^{\tt I}(\gfr)_*/\cE^{\tt I}_{_P}(\mathfrak p)_*$ is $-1$}. This condition naturally fits into the setting where Gromov-Witten numbers can be used to quantify it.

\begin{thm} \label{mt2} A point $\boldsymbol{\theta} \in \Delta^{ss}$  lies in $\Delta^{s}$ if and only if the trivial $G$-bundle $\cE^{\tt I}$ with generic $B$-structures and marking $\boldsymbol{\theta^{^{ext}}}$ does not have any $P$-reduction $\cE^{\tt I}_{_P}$  {\sl of the minus 1 type}.
\end{thm}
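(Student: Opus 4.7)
The plan is to leverage the preceding theorem, which identifies $\boldsymbol{\theta} \in \Delta^{s}$ with stability of the trivial $G$-bundle $\cE^{\tt I}$ (carrying generic $B$-structures and extended weight $\boldsymbol{\theta}^{ext}$) as a point of $\cM(\cG^{\tt I})$. The task then is to match the failure of strict stability, under the a priori semistability hypothesis, with the existence of a $P$-reduction of minus-$1$ type. The forward direction is immediate: if $\boldsymbol{\theta} \in \Delta^{s}$, stability of $\cE^{\tt I}$ in the Ramanathan sense gives
\[
\text{pardeg}\bigl(\cE^{\tt I}(\gfr)_*/\cE^{\tt I}_{P}(\pfr)_*\bigr) > 0
\]
for every proper $P$-reduction, which forbids any minus-$1$-type reduction (those having parabolic degree exactly $0$).

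For the converse, I would argue by contraposition. Suppose $\boldsymbol{\theta} \in \Delta^{ss}\setminus\Delta^{s}$. The previous theorem together with semistability (all parabolic degrees $\geq 0$) yields, for generic $B$-structures, a proper $P$-reduction with parabolic degree exactly $0$; the remaining task is to upgrade such a reduction to one of minus-$1$ type. I would parametrise $P$-reductions of the trivial bundle by morphisms $\sigma : \PP^{1} \ra G/P$, stratified by $\deg(\sigma) \in H_{2}(G/P,\ZZ) \cong \ZZ$. The underlying degree of the parabolic quotient $\cE^{\tt I}(\gfr)_*/\cE^{\tt I}_{P}(\pfr)_*$ is then a linear function of $\deg(\sigma)$ shifted by a constant depending on $\boldsymbol{\theta}^{ext}$, with the minus-$1$-type condition matching the minimal non-trivial case. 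Compatibility of $\sigma$ with the chosen Borel at each marked point $x_{i}$ forces $\sigma(x_{i})$ into a Schubert cell determined by the $B$-configuration and by $\boldsymbol{\theta}^{ext}$; the expected dimension of such solutions is precisely a Gromov-Witten / quantum Schubert intersection number, in the spirit of \cite{tw}.

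The principal obstacle is this dimension/Schubert calculation. Concretely, one must show that for a generic choice of $B$-structures at the marked points, any pardeg-zero $P$-reduction arising from a $\sigma$ of degree larger than the minus-$1$-type case exists only over a proper Zariski-closed subset of the configuration space of Borels, and is therefore avoided by genericity, leaving reductions of minus-$1$ type as the sole possible obstruction to stability. This is the step that, as indicated in the introduction, brings Gromov-Witten invariants into the picture and yields the numerical form of Corollary \ref{stablepolytope}; all other parts are essentially formal consequences of the previous theorem and the definition of parabolic $G$-bundle stability.
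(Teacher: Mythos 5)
Your forward direction matches the paper's: stability of the trivial bundle with generic $B$-structures (via Theorem \ref{mt}) forces $\mathrm{pardeg}(\cE^{\tt I}(\gfr)_*/\cE^{\tt I}_{_P}(\pfr)_*)>0$ for every proper reduction, excluding minus-$1$-type ones. For the converse you also start correctly: Theorem \ref{mt} plus semistability reduces the problem to passing from the existence of \emph{some} pardeg-zero $P$-reduction to the existence of one of minus-$1$ type. But the step you yourself flag as ``the principal obstacle'' is the entire content of the theorem, and the route you propose for it is not the paper's and is not obviously viable. You want to show that pardeg-zero reductions whose underlying quotient degree is not $-1$ occur only over a proper Zariski-closed subset of the configuration space of Borels and are therefore avoided by genericity. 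Nothing in a dimension or Schubert count singles out quotient degree $-1$: a pardeg-zero reduction with quotient degree $\le -2$ can correspond to a nonzero Gromov--Witten number just as well, so such reductions need not be non-generic. The distinguished role of $-1$ comes from a different source entirely.

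What the paper actually does is a deformation-theoretic argument on the smooth irreducible versal family $T$ of $B$-structures on the trivial bundle. For a reduction with $\mathrm{pardeg}(\cE^{\tt I}(\gfr)_*/\cE^{\tt I}_{_P}(\pfr)_*)=0$, the map from deformations of $\cE^{\tt I}_{_P}$ to deformations of $\cE^{\tt I}$ is surjective precisely when $H^1$ of the quotient in $0 \to \cE^{\tt I}_{_P}(\pfr)_* \to \cE^{\tt I}(\gfr)_* \to \cE^{\tt I}(\gfr)_*/\cE^{\tt I}_{_P}(\pfr)_* \to 0$ vanishes, and on $\PP^1$ this is controlled by the underlying degree of that quotient being $-1$. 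If no reduction is of minus-$1$ type, every pardeg-zero reduction is obstructed and does not spread to the generic point of $T$, whereas any destabilizing reduction at the generic point would spread to a Zariski neighbourhood; hence the generic torsor is stable and $\boldsymbol{\theta}\in\Delta^{s}$. So the missing Schubert/dimension computation in your proposal should be replaced by this cohomological obstruction argument; Gromov--Witten numbers enter only afterwards, in Corollary \ref{newvercrit}, to make the ``no minus-$1$-type reduction'' condition numerically checkable. As written, your proof has a genuine gap at its central step.
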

Corollary \ref{newvercrit} to the above theorem gives a new verifiable criterion for the main question of this paper.

\subsubsection{Some remarks on the far wall and stability } We conclude by giving a few words of justification why the far wall cannot be avoided for the stability question (although it can be avoided for semistability and $G=\text{GL}(n)$). Let us mention some difficulties. Firstly, in \cite[\S 7]{me} some examples of stable parahoric symplectic and (special  orthogonal)  torsors are shown to lie on the product of far walls.  Secondly, Belkale has shown that $(\Delta^{ss})^\circ \subset \Delta^{s}$ (cf \cite[Prop 7.0.5]{me}). So they have the same closures in $\ol{\mathbf{a}_0}^{|\cR|}$. Also the origin in $\ol{\mathbf{a}_0}^{|\cR|}$ lies in $\Delta^{ss} \setminus \Delta^s$ because it corresponds to the case of principal $G$-bundles. 
 Lastly, to the best of our knowledge, no argument like the one by Meinrenken and Woodward for semistability is known for stability which may allow one to restrict oneself to $\mathbf{a}_0^{|\mathcal{R}|}$. Thus,  it does not seem possible to reduce the problem of determining  $\Delta^s$ to the case of generic weights. We  are forced to consider the closure $\ol{\mathbf{a}}_0^{|\cR|}$ fully and directly. Now weights on the far wall of $\ol{\mathbf{a}_0}$ correspond to strictly parahoric (non-parabolic)  torsors under Bruhat-Tits group schemes and these need to be reckoned with. 



 
\subsection{Layout} We develop notions over a general smooth projective curve $X$ over an algebraically closed field $k$ of arbitrary characteristic throughout the paper and specialize to the case $X=\PP^1$ and $k=\CC$ only to prove the main theorems. In \S \ref{modulistack}, we explain our basic set-up. Then after recalling the main consequences of our construction,  we prove the main theorems in \S \ref{mtsection} and \S \ref{deformation}. The introduction of \S \ref{sshm} explains the main constructions of the paper. 
 
\section{The moduli stack $\cM_X(\cG)$} \label{modulistack}
\subsubsection{Local group theoretical data of parahoric group schemes} \label{lgpthedata}
We will write $k$ instead of $\CC$  whenever the results we use holds for an algebraically closed field of arbitrary characteristic.
 Let $A:=k[[t]]$ and $K:=k((t))=k[[t]][t^{-1}]$, where $t$ denotes a uniformizing parameter.
Let $G$ be a {\em semisimple simply connected affine algebraic
group} defined over $k$. We now want to consider the group $G(K)$. 


We shall fix a maximal torus $T \,\subset\, G$ and let
$Y(T)\,=\, \Hom(\GG_m,\, T)$ denote the group of all
one--parameter subgroups of $T$. For each maximal torus $T$ of $G$, the {\it
standard affine apartment} $\cA_T$  is an affine space under $Y(T)
\otimes_\ZZ \RR$. We may identify $\cA_T$ with $Y(T) \otimes_\ZZ \RR$
(see \cite[\S~2]{bs}) by choosing a point $v_0 \in \cA_T$. This $v_0$ is also called an origin. For a root $r$ of $G$ and an integer $n \in \ZZ$, we get an affine functional
\begin{equation} \label{afffunc} \alpha=r + n : \cA_T \ra \RR, x \ms r(x -v_0) +n.
\end{equation}
These are called the {\it affine roots} of $G$. For any point $x \in \cA_T$, let $Y_x$ denote the set of affine roots vanishing on $x$. For an integer $n\geq 0$, define
\begin{equation} \label{facetdefn}
\cH_n=\{x \in \cA_T | |Y_x|=n \}.
\end{equation}
A facet $\sigma$ of $\cA_T$ is defined to be a connected component of $\cH_n$ for some $n$. The dimension of a facet is its dimension as a real manifold. 

Let $R\,=\,R(T,G)$ denote the root system of $G$ (cf. \cite[p. 125]{springer}). Thus for
every $r \,\in\, R$, we have
the root homomorphism $u_r \,: \, \GG_a\,\lra\, G$ \cite[Proposition 8.1.1]{springer}. 
{\it In this paper $\Theta$ will always be either a {\em facet} or a point of $\cA_T$}. By \cite[Section 1.7]{bt} we have an affine flat smooth group scheme $\cG_\Theta\,\lra\, \Spec(A)$ called the {\it parahoric group scheme} associated to $\Theta$.  The group scheme $\cG_\Theta$ is uniquely determined by its
$A$--valued points. For a facet $\sigma \subset \cA_T$, let  $\cG_\sigma \ra \Spec(A)$ be the parahoric group scheme defined by $\sigma$.


\subsubsection{Alcove} \label{alcove}
We choose a Borel $B$ in $G/k$ containing $T$. This determines a choice of positive roots.  Let 
$\mathbf{a}_0$ denote the unique alcove in $\cA_T$ whose closure  contains $v_0$ and is contained in the finite Weyl chamber determined by positive simple roots. The affine walls defining $\mathbf{a}_0$ determine a set $\mathbf{S}$ of simple {\it affine roots}. We will denote these simple roots by the symbols $\{\alpha_i\}$.

\subsubsection{The Bruhat-Tits group scheme} \label{gpsch} For an arbitrary closed point $y \in X$ let $\mathbb{D}_y:=\Spec(\hat{\mathcal{O}_y})$, let $K_y$ be the quotient field of $\hat{\cO_y}$ and let $\mathbb{D}^\circ_y=\Spec(K_y)$. Let $\cR \subset X$ be a non-empty finite set of closed points. For each $x \in \cR$, we choose a facet $\sigma_x \subset \cA_T$. Let $\cG_{\sigma_x} \ra \mathbb{D}_x$ be the parahoric group scheme corresponding to $\sigma_x$. Let $X^\circ = X \setminus \cR$. {\it In this paper,  by a Bruhat-Tits group scheme $\cG \ra X$ we shall mean that  $\cG$ restricted to ${X^\circ}$ is isomorphic to $ X^\circ \times G$, and for any closed point $x \in X$,  $\cG$ restricted to $\mathbb{D}_x$ is a parahoric group scheme $\cG_{\sigma_x}$ such that the gluing functions take values in $Mor(\mathbb{D}^\circ_x,G)=G(K_x)$.} This is also the set-up of \cite[Defn 5.2.1]{bs} (see also \cite{heinloth} and \cite{zhu}). {\it We also suppose that the facets $\{\sigma_x\}_{x \in \cR}$ lie in the closure of $\mathbf{a}_0$.} It can easily be seen that for proving the main results of this paper, the general case of arbitrary facets reduces to this case.

Let $\mathbb{D} \subset X$ be an arbitrary formal disc about a point. Let $\cG_{_{\mathbb{D}}}$ be the restriction of $\cG$ to the disc. The  $\hat{\cO}$-points $\cG_{_{\mathbb{D}}}(\hat{\cO})$ gives subgroups of 
$\cG_{_{\mathbb{D}}}(K) = G(K)$ and these are called the parahoric subgroups of $G(K)$. The subgroup $G(\hat{\cO}) \subset G(K)$ is an example of a maximal parahoric subgroup. We have a natural evaluation map $ev:G(\hat{\cO}) \to G(\mathbb C)$ and the inverse image $\mathtt I := ev^{^{-1}}(B)$ of the standard Borel subgroup $B \subset G$ is called the standard Iwahori subgroup. Observe that any parahoric subgroup $\cG_{_{\mathbb{D}}}(\hat{\cO})$ contains a $G(K)$-conjugate of the standard Iwahori subgroup $\mathtt I$. The group scheme $\cG^{^{\mathtt I}}_{_{\mathbb{D}}}$ such that $\cG^{^{\mathtt I}}_{_{\mathbb{D}}}(\hat{\cO}) = \mathtt I$ is called the standard Iwahori group scheme. 


\begin{rem} \label{gluingfn} The group scheme  $\cG$ depends on the gluing data. But if $\cG$ and $\cG'$ are two parahoric group schemes on $X$ which differ only in their gluing data, then it is straightforward to check that the stacks $\cM_X(\cG)$ and $\cM_X(\cG')$ are isomorphic. For this reason, we fix one gluing data to get $\cG$ and work with this. \end{rem}

Let $\cG^{^{\mathtt I}} \ra X$ (resp. $\cG^{'^{\mathtt I}} \ra X$) be the group scheme obtained by gluing $X^\circ \times G$ with $\mathcal{G}^{^{\mathtt I}}$ at each parabolic point $x \in \cR$ using the same gluing functions as $\mathcal{G}$ (resp. $X \times G$). The inclusions $\mathtt I \subset \cG_{_{\mathbb{D}}}(\hat{\cO})$ (resp. $\mathtt I \subset G(\hat{\cO})$) induce morphisms of group schemes $\cG^{^{\mathtt I}} \to \cG$ (resp. $\cG^{'^{\mathtt I}} \to X \times G$) over the whole of $X$. 
\subsubsection{Parahoric torsors} \label{pt}

Let $\cG \ra X$ be a group scheme as in \S \ref{gpsch}. A {\it quasi-parahoric} torsor $\cE$ is a $\cG$--torsor on $X$.  This means that $\cE \times_X \cE \simeq \cE \times_X \cG$ and there is an action map $a: \cE \times_X \cG \ra \cE$ which satisfies the usual axioms for principal $G$-bundles. A {\it parahoric torsor} is a pair $(\cE\, , {\boldsymbol\theta})$ consisting of the pair of a quasi-parahoric torsor and  weights ${\boldsymbol\theta}\,=\, \{\theta_x| x \in \cR \} \in (Y(T) \otimes \RR)^m$ such that $\theta_x$ lies in the facet $\sigma_x$ (cf \S \ref{gpsch}) and $m=|\cR|$. Let $\cM_X(\cG)$ denote the moduli stack of $\cG$-torsors on $X$. The natural morphisms of group schemes seen above induces the following morphisms of stacks:
\begin{equation}
\cM_X(\cG) \leftarrow \cM_X(\cG^{^{\mathtt I}}) \stackrel{\ref{gluingfn}}{\simeq} \cM_X(\cG^{'^{\mathtt I}}) \ra \cM_X(G).
\end{equation}

In particular, the morphism $ \cM_X(\cG^{{\tt I}})  \ra \cM_X(G)$
induced by the morphism $\cG^{^{\mathtt I}} \to X \times G$ can be viewed as follows. The points of the stack $\cM_X(\cG^{{\tt I}})$ are   $G$-bundles on $X$ with $B$-structures at the marked points $\cR$. The morphism $\cM_X(\cG^{{\tt I}}) \ra \cM_X(G)$ forgets the $B$-structures. Thus, $\cM_X(\cG^{{\tt I}})$ seen from the standpoint of $\cM_X(G)$ is the analogue of the moduli stack of vector bundles with full-flag structures at the marked points.  {\sl We will call morphisms in the  diagram (\ref{Heckediag}) as Hecke-modification}.

Although in the literature a sequence of flip-flop is called a Hecke-modification, but we wish to emphasize that often  only a {\it single} morphism as above will be required for the proofs in this paper. For the usual case of  parabolic vector bundles these one-step modification morphisms  correspond to usual  Hecke-modifications. For torsors, we will call them both by the same name following Balaji-Seshadri \cite{bs}.

\section{Main Theorem} \label{mtsection}
In this section we suppose that $X=\PP^1$. We deduce the main theorem of the paper using general results proved in later sections. Let us summarize these.


 Let $(\cE,{\boldsymbol{\theta}})$ be a parahoric torsor. Under $p: \cM_X(\cG^{{\tt I}}) \ra \cM_X(\cG)$, let $\cE^{{\tt I}}$ be an arbitrary $\cG^{{\tt I}}$-torsors that maps to $\cE$ and consider the  Hecke-modification  diagram \eqref{Heckediag}. Given a parabolic vector bundle with possibly partial flags, Belkale \cite{belkale} makes a   construction called {\it completing flags}. In an analogous fashion (with a somewhat involved ``parahoric" adaptation),   in \S \ref{sshm} we explain how after choosing  any finer facet $\mathbf{a}^x_{Ad} \subset \mathbf{a}$, in whose closure $\theta_x$ lies,  ${\boldsymbol{\theta}}$ may be extended as a weight ${\boldsymbol{\theta}}^{^{ext}}$ on $\cE^{{\tt I}}$. 
We extend the definition of (semi)stablity for such objects and call this construction {\it extending weights}. Then in Proposition  \ref{stabilityExtendingweights} we show that $(\cE,{\boldsymbol{\theta}})$ is stable if and only if $(\cE^{{\tt I}}, {\boldsymbol{\theta}}^{^{ext}}, \{\mathbf{a}^x_{Ad} \})$ is stable as a extended weight parahoric torsor. Now we view $\cE^{{\tt I}}$ as a $G$-bundle together with additional parabolic structure at $\cR$.


\begin{thm} \label{mt} With notations as above, the open sub-stack $\cM(\cG)^{^s}$ of $\cM(\cG)$ consisting of stable torsors with weight $\bsm{\theta}$ is non-empty if and only if the trivial $G$-bundle with generic $B$-structures and weight $\boldsymbol{\theta^{^{ext}}}$ is stable. 
\end{thm}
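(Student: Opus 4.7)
The plan is to combine Proposition \ref{stabilityExtendingweights} with the rigidity of $G$-bundles on $\PP^1$. Proposition \ref{stabilityExtendingweights} converts stability of a $\cG$-torsor $(\cE,\boldsymbol\theta)$ into the stability of every Iwahori lift $\cE^{{\tt I}} \in p^{-1}(\cE)$ equipped with the extended weight $\boldsymbol\theta^{^{ext}}$, viewed as a point of $\cM_X(\cG^{{\tt I}})$; composing with the forgetful map $q$ in the Hecke-modification diagram \eqref{Heckediag}, this becomes an $\boldsymbol\theta^{^{ext}}$-parabolic stability condition on the underlying $G$-bundle. Thus the theorem reduces to showing that the stable locus of $\cM_X(\cG^{{\tt I}})$ with weight $\boldsymbol\theta^{^{ext}}$ is non-empty if and only if the specific point $\cE^{{\tt I}}_{\mathrm{triv}}$ representing the trivial $G$-bundle with generic $B$-structures lies in it.

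The reverse implication is immediate: if $\cE^{{\tt I}}_{\mathrm{triv}}$ with weight $\boldsymbol\theta^{^{ext}}$ is stable, then $p(\cE^{{\tt I}}_{\mathrm{triv}})$ is a $\cG$-torsor with weight $\boldsymbol\theta$ which is stable by Proposition \ref{stabilityExtendingweights}, so $\cM_X(\cG)^s \neq \emptyset$.

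For the forward direction I would suppose $\cM_X(\cG)^s$ is non-empty and pick a stable $(\cE,\boldsymbol\theta)$. Choose any lift $\cE^{{\tt I}} \in p^{-1}(\cE)$; by Proposition \ref{stabilityExtendingweights}, the pair $(\cE^{{\tt I}}, \boldsymbol\theta^{^{ext}})$ is stable in $\cM_X(\cG^{{\tt I}})$, hence the stable locus there is open and non-empty. The key geometric input is that for $X = \PP^1$ and $G$ semisimple simply connected, Grothendieck's classification of $G$-bundles forces the trivial $G$-bundle to determine a dense open substack $\cU \subset \cM_X(G)$. Therefore $q^{-1}(\cU) \subset \cM_X(\cG^{{\tt I}})$ is open and dense, and within it the locus of generic $B$-reductions at the marked points is again open dense. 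This latter open dense substack $\cV$ must intersect the non-empty open stable locus, and since $\cV$ consists of precisely the points of the form $\cE^{{\tt I}}_{\mathrm{triv}}$ (up to the automorphisms of the trivial bundle), one concludes that the trivial $G$-bundle with a generic choice of $B$-structures is itself stable with weight $\boldsymbol\theta^{^{ext}}$.

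The main obstacle will be verifying that the extension construction $\boldsymbol\theta \mapsto \boldsymbol\theta^{^{ext}}$ from \S \ref{sshm} and the associated stability test are sufficiently intrinsic to be preserved under the specialization from a stable lift $\cE^{{\tt I}}$ to the trivial-bundle point. This hinges on the fact, recorded in \S \ref{sshm}, that $\boldsymbol\theta^{^{ext}}$ is determined by $\boldsymbol\theta$ together with an auxiliary finer facet $\mathbf{a}^x_{Ad}$ independent of the torsor, and on the preservation of openness of stability across the morphisms $p$ and $q$ in \eqref{Heckediag}. Once these compatibilities are in place, the openness-plus-density argument above goes through verbatim.
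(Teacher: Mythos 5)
Your proposal is correct and follows essentially the same route as the paper: both directions reduce to Proposition \ref{stabilityExtendingweights}, and the forward direction combines openness of stability with the fact that the trivial $G$-bundle is the generic $G$-bundle on $\PP^1$. The only cosmetic difference is that the paper realizes this genericity by citing Ramanathan's deformation theorem to put the bundle underlying $\cE^{{\tt I}}$ in an explicit family over an affine base whose generic member is trivial (and then pulls this back to a family of $\cG^{{\tt I}}$-torsors), rather than invoking density of the trivial-bundle locus in $\cM_X(G)$ via Grothendieck's classification; on $\PP^1$ these are interchangeable.
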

\begin{proof} For  a given weight ${\boldsymbol{\theta}}$ let us choose for each $x \in \cR$ a finer facet $\mathbf{a}^x_{Ad} \subset \mathbf{a}_0$ in whose closure $\theta_x$ lies. By Proposition \ref{stabilityExtendingweights}, there exists a stable torsor $(\cE,{\boldsymbol{\theta}})$ if and only if $(\cE^{{\tt I}},{\boldsymbol{\theta}}^{^{ext}}, \{ \mathbf{a}^x_{Ad} \})$ is stable in the sense of Definition \ref{defnExtendingsemistability}. Under the Hecke modification diagram (\ref{Heckediag}) we may view $\cE^{{\tt I}}$ rather as a parabolic $G$-bundle with weights given by $\boldsymbol{\theta^{^{ext}}}$.


 Since $G$ is simply-connected, so by \cite[Thm 7.4]{ramdef} the principal $G$-bundle $E$ underlying $\cE^{{\tt I}}$ may be put in a family $\mathbf{E} \ra \PP^1 \times T$ where the generic bundle is the trivial $G$-bundle and $T$ is affine. Consider $T_1:=T \times_{\cM_X(G)} \cM_X(\cG^{{\tt I}})$ corresponding to the classifying map $T \ra \cM_X(G)$ of $\mathbf{E}$.  Using the finer facets $\{\mathbf{a}^x_{Ad}\}$, the family $\mathbf{E} \ra \PP^1 \times T$ can be used to make a $T_1$-family $(\mathbf{E}^{\tt I},\boldsymbol{\theta^{^{ext}}},\{\mathbf{a}^x_{Ad}\}) \ra \PP^1 \times T_1$ of parabolic principal $G$-bundles with extended weights. Thus we  have a degeneration to $\cE^{{\tt I}}$ where the underlying bundle of the generic object is trivial. Under  the morphism $\cM_X(\cG^{{\tt I}}) \ra \cM_X(\cG)$ let $\mathbf{E}^{\tt I} \ra \PP^1 \times T_1$ give the family $\mathbf{E_1} \ra \PP^1 \times T_1$ of $\cG$-torsors. It degenerates to $\cE$ and we view it as a family with weight $\boldsymbol{\theta^{^{ext}}}$. Further, \cite[Prop 6.1.2]{me} stability  is an open property of parahoric torsors. Thus for $t \in T_1$ generic, $\mathbf{E}_t \ra \PP^1$ is stable with weight $\boldsymbol{\theta^{^{ext}}}$. Therefore by Proposition \ref{stabilityExtendingweights} for generic $t$, $(\mathbf{E}^{\tt I},\boldsymbol{\theta^{^{ext}}},\{\mathbf{a}^x_{Ad}\})_t \ra \PP^1$ is stable.  So the trivial $G$-bundle with parabolic weights given by $\boldsymbol{\theta^{^{ext}}}$ and generic quasi-parabolic structure is stable. 
\end{proof}  
 
The above result can be interpreted in the setting of \cite{tw} with notations as in \cite[Page 716]{tw}. Let us denote the Gromov-Witten numbern as
\begin{equation} \label{GWinv} n_d(w_x | x \in \cR).
 \end{equation}   
It counts the  number of regular maps $\phi: X \ra G/P$ of degree $d$ such  that  for $x \in \cR$, $\phi(x)$ lies in a generic translate of the Schubert variety $ Y_{w_x} \subset G/P$ corresponding to $w_x \in W$. By Remark \ref{ustotw}, for computing degree of extended weights parahoric torsors we may switch from our definition to \cite{tw} including the far wall. Then by repeating the arguments exactly as in \cite[Page 741, (13)]{tw}, we get the following checkable corollary.




\begin{Cor} \label{stablepolytope} 
The polytope $\Delta^s$ is the set of points $\boldsymbol{\theta}$ satisfying the inequality
\begin{equation} \label{stability}
\sum_{x \in \cR} (w_x \omega_P, \theta_x) < d
\end{equation}
for all maximal parabolic subgroups $P \subset G$ and non-negative integers $d$ such that the Gromov-Witten invariant (cf (\ref{GWinv})) $n_d(w_x| x \in \cR) \neq 0$.
\end{Cor}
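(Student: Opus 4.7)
The plan is to apply Theorem \ref{mt2} and then repeat the Gromov--Witten count of \cite[p.~741,~(13)]{tw} in the extended-weight setting furnished by the Hecke-modification diagram.

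First, by Theorem \ref{mt2}, $\boldsymbol\theta\in\Delta^{ss}$ belongs to $\Delta^{s}$ iff the trivial $G$-bundle $\cE^{\tt I}$ with generic $B$-structures and weight $\boldsymbol\theta^{ext}$ admits no $P$-reduction of the minus-$1$ type. A standard refinement argument lets us restrict to maximal parabolics $P$, since any destabilizing $P$-reduction factors through a maximal parabolic containing $P$ while preserving the parabolic-degree inequality. Since the underlying bundle of $\cE^{\tt I}$ is trivial, a $P$-reduction is the same as a section of $\PP^{1}\times G/P\to\PP^{1}$, equivalently a regular map $\phi:\PP^{1}\to G/P$. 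For maximal $P$ one has $H_{2}(G/P,\ZZ)=\ZZ$, so $\phi$ has a well-defined non-negative integer degree $d$. Compatibility of the reduction with the chosen generic $B$-structure at each marking $x\in\cR$ is exactly the condition that $\phi(x)$ lie in a generic translate of a Schubert variety $Y_{w_{x}}\subset G/P$ with $w_{x}\in W^{P}$; for generic $B$-data the number of maps $\phi$ realizing prescribed combinatorial data $(P,d,\{w_{x}\})$ is precisely the Gromov--Witten invariant $n_{d}(w_{x}\mid x\in\cR)$ of \eqref{GWinv}.

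Next I would compute the parabolic degree of the adjoint quotient $Q=\cE^{\tt I}(\gfr)_{*}/\cE^{\tt I}_{_{P}}(\pfr)_{*}$ associated to such a reduction. By the standard formula for maximal parabolics, and after the change of conventions supplied by Remark \ref{ustotw} (which passes from our extended-weight setup to that of \cite{tw}, including the far wall), one obtains
\begin{equation*}
\mathrm{pardeg}(Q) \;=\; d \;-\; \sum_{x\in\cR}(w_{x}\omega_{P},\theta_{x}).
\end{equation*}
Hence the two minus-$1$ type conditions (parabolic degree $=0$ and the prescribed normalization $-1$ on the underlying degree of $Q$) single out exactly an integer $d$ and the equality $\sum_{x\in\cR}(w_{x}\omega_{P},\theta_{x})=d$, and such a reduction exists on $\cE^{\tt I}$ with generic $B$-structures iff $n_{d}(w_{x}\mid x\in\cR)\neq 0$.

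Combining, $\boldsymbol\theta\in\Delta^{s}$ iff for every maximal parabolic $P$, every non-negative integer $d$ with $n_{d}(w_{x}\mid x\in\cR)\neq 0$, and every choice of $\{w_{x}\}\subset W^{P}$, the \emph{strict} inequality \eqref{stability} holds. The main obstacle is the bookkeeping in the convention translation: verifying that the ``$-1$'' normalization on the underlying degree of $Q$, read through Remark \ref{ustotw} and the parabolic-degree formula above, matches precisely the integer $d$ that labels the Gromov--Witten invariant, and that the pairing in the stability inequality indeed lands on the original alcove weights $\theta_{x}$ rather than on their extensions $\theta_{x}^{ext}$. Once this bookkeeping is set, the remaining count of reductions is identical to the Teleman--Woodward calculation in \cite[(13)]{tw}, yielding the stated corollary.
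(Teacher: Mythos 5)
Your overall template --- reductions of the trivial bundle are maps $\phi:\PP^1\to G/P$, incidence with the generic $B$-structures is a Schubert condition, genericity means only data with $n_d(w_x\mid x\in\cR)\neq 0$ are realized, and the parabolic degree of the quotient is $d-\sum_x(w_x\omega_P,\theta_x)$ after the convention switch of Remark \ref{ustotw} --- is exactly the Teleman--Woodward computation the paper invokes. But you hang the argument on the wrong theorem, and this creates a real gap in one direction. The paper derives Corollary \ref{stablepolytope} from Theorem \ref{mt}: $\boldsymbol\theta\in\Delta^s$ iff the trivial $G$-bundle with generic $B$-structures and weight $\boldsymbol\theta^{ext}$ is \emph{stable}, i.e.\ $\mathrm{pardeg}(\mathrm{Lie}(\cP)_*)<0$ for \emph{every} parabolic reduction; feeding this into the TW count gives the strict inequality \eqref{stability} for all realized $(P,d,\{w_x\})$, in both directions. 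You instead start from Theorem \ref{mt2}, whose criterion is the absence of reductions of the \emph{minus $1$ type}, which requires \emph{two} conditions simultaneously: $\mathrm{pardeg}(Q)=0$ \emph{and} $\deg(Q)=-1$ for the underlying bundle of the quotient.

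The non sequitur is in your ``Combining'' step. Absence of minus-$1$-type reductions does not exclude a reduction with $\mathrm{pardeg}(Q)=0$ but $\deg(Q)\neq -1$; such a reduction achieves equality $\sum_x(w_x\omega_P,\theta_x)=d$ and hence violates \eqref{stability}, yet is invisible to Theorem \ref{mt2}. So from ``$\boldsymbol\theta\in\Delta^s$'' you cannot conclude, via Theorem \ref{mt2} alone, that the strict inequality holds for all data with nonvanishing Gromov--Witten number; the two conditions of the minus-$1$ type do not ``single out exactly \ldots the equality $\sum=d$'' --- the equality is only one of them, and the degree normalization $-1$ is an independent constraint (indeed, Theorem \ref{mt2} feeds into the \emph{different} verifiable criterion of Corollary \ref{newvercrit}, where the underlying degree of $Q$ is determined from the GW input data and the $-1$ condition is tested separately). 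The converse direction of your argument (strict inequalities $\Rightarrow$ no equality case at all $\Rightarrow$ in particular no minus-$1$-type reduction $\Rightarrow$ stable) does go through. To repair the forward direction, replace Theorem \ref{mt2} by Theorem \ref{mt} and run the TW argument on the full stability condition; the rest of your geometric setup can then be kept as is.
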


\section{Filtrations associated to Parabolic vector bundles} \label{rnfiltrations} For simplicity, we will first assume that we are working with one parabolic point $x$.
Recall a quasi-parabolic structure is giving a possibly partial filtration
\begin{equation} \label{filtration}
\cdots \supset E=F_0(E) \supset F_1(E) \supset \cdots \supset F_{l-1}(E) \supset F_{l}(E)=E(-D)  \supset \cdots
\end{equation} 
by subsheaves, which can be continued infinitely in both directions. Here $l$ is called the length of the filtration. It can at most be the rank of $E$.
It is called a parabolic sheaf if it has a system of weights $\alpha_0,\cdots,\alpha_{l-1}$ such that 
\begin{equation} \label{strictversion} 0 \leq \alpha_0 < \alpha_1 \cdots < \alpha_{l-1} <1.
\end{equation} The weight $\alpha_i$ is called the weight of the subsheaf $F_i(E)$. A given filtration (\ref{filtration}) need not be full. By choosing any complete flags for a given parabolic bundle,
in \cite{belkale} the notion of $\RR$-filtration of \cite{maruyamayokogawa} is extended to complete flag parabolic vector bundles with extended weights in \cite[Appendix]{belkale} as follows. He considers a filtration \begin{equation} \label{nfiltration} \cdots \supset E_n \supset E_{n+1} \supset \cdots
\end{equation} of sheaves with strict inclusions as in (\ref{filtration}) parametrized by $\mathbb{Z}$ together with weights $\{\alpha_n \}$  in $\RR$, which are allowed to coincide now. Thus if we forget sheaves $E_m$ for which there exists a $k>0$ such that  $\alpha_{m-k}=\alpha_m$, then the reduced subset of  $\{E_n\}$, together with the  corresponding weights which are now distinct correspond to (\ref{filtration}). In other words, (\ref{strictversion}) has become non-strict and is extended by
\begin{equation} \label{indexing2} \alpha_{k+ml}= \alpha_k+m.
\end{equation}
{\it The constructions in \cite{belkale} extend in an obvious way to mutliple parabolic points as well as to non-complete flag parabolic vector bundles, just that the indices are harder to write because there may be jumps because of partial flags.}  Notice that as subsheaves become smaller, their weights become larger. On any term $E_m$ of the filtration (\ref{filtration}) we can induce the structure of a parabolic vector bundle by using the  $l$ successive subsheaves in (\ref{filtration}) to get the flags; their corresponding weights may  lie outside of $[0,1)$, but, after sliding to make the weight of $E_m$ as zero they will lie in $[0,1)$. This will be denoted $E_{m*}$. Conversely, $E_{m*}$ gives (\ref{filtration}) upto shifting indices, and the same weights upto sliding.

\subsubsection{Key takeaway on degree computation of sub-bundles from \cite{belkale}} \label{pardegsubsheaf} For simplicity, we first work in the setting of \cite{belkale} which involves one parabolic point. Now weights may now lie outside $[0,1)$. We will denote this as $E_{m*}$. The weights are defined by (\ref{nfiltration}) and (\ref{indexing2}) in a way that the parabolic degree of $E_{m*}$ becomes independent of $m \in \ZZ$ (cf \cite[page 83 last para]{belkale}). Further, on \cite[page 84]{belkale} for any $m_1,m_2 \in \ZZ$ a natural procedure is explained to go from sub-bundles of $E_{m_1*}$ to $E_{m_2*}$. By \cite[Lemma 8]{belkale} this procedure preserves parabolic degree of sub-bundles too. Thus $E_{m_1*}$ is (semi)stable if and only if $E_{m_2*}$ is (semi)stable. These results  generalize to multiple parabolic points also.
 
\subsubsection{Interpretation of passage from $E_{m_1*}$ to $E_{m_2*}$  in our set-up of alcoves, weights, facets and Diagram \ref{Heckediag}}  \label{belkaleus} To enable us to adapt aspects of this process in the setting parahoric torsors, we need to interpret it in the language of alcoves. 

Let us consider the case of $SL(n)$. Let us label the vertices of $\mathbf{a}_0$ by integers $\{0,\cdots,n-1\}$. Any facet $\sigma$ in $\cA_T$ of dimension $d$, determines a set of $d+1$ vertices. Let us call {\bf the far wall of $\sigma$} as the codimension one facet determined by forgetting the smallest vertex. Define alcove $\mathbf{a_{k+1}}$ inductively by reflecting the alcove $\mathbf{a_{k}}$ along the far wall and label the new vertex by $n+k$. Let us view the weights of $E_{k*}$ (\ref{nfiltration}) as a point in $\ol{\mathbf{a_k}}$ by taking bary-centric coordinates $\{\alpha_{E_{k+1}}- \alpha_{E_{k}}, \cdots, \alpha_{E_{k+n}} -\alpha_{E_{k+n-1}} \}$ . When we pass from $E_{0*}$ to $E_{1*}$, it follows from (\ref{indexing2}) that the weights of $E_{m+1*}$ are obtained by reflecting the weights of $E_{m*}$ along the far wall of $\mathbf{a}_{m}$. Thus in terms of barycentric coordinates as a set they remain the same, just that their indexing is shifted by $-1 \, (mod \, n)$ respectively.

Let $\sigma$ be a facet in the closure of $\mathbf{a}_0$ of codimension one where only affine root  $\alpha_d$ vanishes. Then the morphism $\cM_X(\cG^{I}) \ra \cM_X(\cG^{\sigma})$ corresponds to forgeting subsheaves in the $\ZZ$-filtration whose index is $d~(\text{mod}~n)$. In terms of complete flag parabolic vector bundles, this corresponds to forgetting exactly one flag for $d \neq 0$ and a Hecke-modification by $E_0/E_1$ for $d=0$. These facts are of course much more general than $\mathbf{a}_0$ and its facets. They hold for any pair of facet $\sigma_1$ and its codimension one subfacet $\sigma$.  Going to far wall of $\sigma_1$ corresponds to a Hecke-modification by a sky-scraper sheaf while forgetting other vertices corresponds merely to forgetting flags in $
 \mathcal{M}_X(\mathcal{G}^{\sigma_1})$.  The above process also generalizes to the graph of the hyperplane structure suitably. More precisely, for any two facets $(\sigma_1,\sigma)$ where $\sigma$ lies in the closure of $\sigma_1$, the path we take to come from $\sigma_1$ to $\sigma$ is not important i.e forgetting flags and Hecke-modifications by sky-scraper sheaves commute.


\section{Extending Weights on $\cG^{{\tt I}}$ torsors} \label{sshm}
From this section onwards the results are of a general nature and so $X$ will be an {\em arbitrary Riemann surface of genus $g \geq 0$}. For simplicity of notation, we further assume that only one parabolic point $x \in X$ is fixed. For all that is done, it will be clear to the reader that the entire process can be carried individually at several points. However, for the main application the final conclusions will be made in terms multiple points on $X$.
 
 
In this section completing flags with induced weights construction of \cite{belkale} is generalized to {\it extending weights for $\mathcal{G}^{I}$}.  Recall that
similar to the definition of Mehta-Seshadri, in \cite{bs}  weights have been defined  for parahoric torsors to be points lying in the facets (cf \S \ref{pt}).
 Consider the morphism of stacks $  p: \cM_X(\cG^{{\tt I}}) \ra \cM_X(\cG)$. As in \S \ref{pt}, suppose that we are given weights ${\boldsymbol\theta}\,=\, \{\theta_x| x \in \cR \} \in (Y(T) \otimes \RR)^m$ such that $\theta_x$ lies in the facet $\sigma_x$ and $\sigma_x \subset \overline{\mathbf{a}_0}$ for all $x \in \cR$.
 
In \cite[BBP]{bbp}, given a representation $\rho: G \ra SL(V)$ and a parahoric Bruhat-Tits torsors $\cE$, the parabolic vector bundle $(\cE(V)_{_*},{\boldsymbol{\theta}}_{_V})$ has been constructed.  A priori, a naive approach would be to take the parabolic bundle $(\cE(V)_{_*},{\boldsymbol{\theta}}_{_V})$ and carry out a process as in \cite{belkale}, of deforming the underlying bundle after possibly some Hecke-modifications, to get one with a full-flag and suitable schema of weights. But the difficulty is that the new generic parabolic vector bundles  need  not come as an extension of structure group from any parahoric torsor via $\rho$. 
 
{\it In this section, given $(\cE,\bsm{\theta})$, a representation $\rho$ and the choice of a $\cG^{{\tt I}}$-torsor $\cE^{{\tt I}}$, we want to construct an associated object $(\cE^{{\tt I}}(V),{\boldsymbol{\theta}}^{{\tt I}}_{_V})$, where $\cE^{{\tt I}}(V)$ comes with a {\em quasi-parabolic structure} with a well-defined schema of weights ${\boldsymbol{\theta}}^{{\tt I}}_{_V}$.} Let us mention two problems that  arise when we try to extend the weight ${\boldsymbol\theta}$ for $\cM_X(\cG^{{\tt I}})$. Firstly, the weights $\theta_x$ only belong to the closure $\ol{\mathbf{a}_0}$ and not to the alcove $\mathbf{a}_0$ itself. So the set-up of \cite{bbp} does not apply. Secondly if we take a sequence of weights lying in $\mathbf{a}_0$ and converging to $\bsm{\theta}$, then by the construction in \cite[BBP]{bbp} even the quasi-parabolic structure of $(\cE^{{\tt I}}(V)_*,{\boldsymbol{\theta}}_V)$ is sensitive to the choice of $\boldsymbol{\theta}$. 
 
We address  these problems {\it by choosing for each $x \in \cR$ a finer $\rho$-facet $\mathbf{a}^x_\rho$ (defined below) in whose closure $\theta_x$ lies. These are defined by the requirement that all points in $\mathbf{a}^x_\rho$ under $\rho: \cA_T \ra \cA_{T_{SL(V)}}$ go to a fixed open facet of $SL(V)$, of dimension at most the dimension of $T$, in whose closure $\rho(\theta_x)$ lies.} Now the flag structure on $(\cE^{\tt I},\boldsymbol{\theta}, \{ \mathbf{a}^x_\rho \})$ may not be full since it will have at most $\dim(T)$ many distinct flags (or weights). More importantly, unlike \cite{belkale} the vector bundle underlying it may only be related to the one underlying $(\cE(V)_*,{\boldsymbol{\theta}}_V)$ by a Hecke-modification. So instead of completing flags we call this construction {\sl extending weights for parahoric torsors}. In the set-up of \cite{belkale} we have $\rho=Id$. This reflects the facts that a parabolic vector bundle determines a choice of an alcove whict itself is a finer $\rho$-facet. {\sl In the  applications of the constructions carried out here, we will mostly have to take $\rho=Ad$ and so $V=\mathfrak{g}$}.

\subsubsection{Extending weights construction for $\mathcal{G}^{\tt{I}}$-torsors with respect to a representation $\rho$} \label{Extendingweights} Let $\cE$ be a $\cG$-torsor and let $\cE^{{\tt I}}$ be a $\cG^{{\tt I}}$-torsor lying in the fiber of $\cM_X(\cG^{{\tt I}}) \ra \cM_X(\cG)$. To lighten the notation, it suffices to treat the case of one parabolic point i.e. $\cR=\{x\}$. For a representation $\rho: G \ra SL(V)$ we choose tori $T_G \subset G$ and $T_{SL} \subset SL(V)$ such that $\rho$ maps $T_G$ to $T_{SL}$. Thus we get a linear map 
\begin{equation} \label{linearapartmentmap}
\rho: \cA_T \ra \cA_{T_{SL}}
\end{equation}
between the apartments. In \cite[BBP]{bbp},  the usual definition of facet is generalized to {\it facets associated to a homomorphism $\rho$}  as follows.  By a generalized affine functional on $\cA_T$ we mean affine functionals for $G$ together with those of $\cA_{T_{SL}}$ viewed as functionals on $\cA_T$. For any point $x \in \cA_T$, let $Y_x^{g}$ denote the set of generalized affine functionals vanishing on $x$. For an integer $n\geq 0$, define
\begin{equation} \label{facetdefng}
\cH_n^g =\{x \in \cA_T | |Y_x^g|=n \}.
\end{equation}
A $\rho$-facet $\sigma$ of $\cA_T$ corresponding to a representation $\rho$ is defined to be a connected component of $\cH_n^g$ for some $n$. The dimension of a $\rho$-facet is its dimension as a real manifold. The finer facets satisfy the property that for any two weights belonging to it, the parabolic vector bundles associated to $\rho$ have the same quasi-parabolic structure.

\begin{defi} \label{partorextendedweights} Let $(\cE,{\boldsymbol{\theta}})$ be a parahoric torsor. By {\it extending weights for a $\cG^{{\tt I}}$-torsor $\cE^{{\tt I}}$ with respect to a given representation $\rho: G \ra SL(V)$} we mean the following. For each $x \in \cR$ we choose a  $\rho$-facet $\mathbf{a}^x_\rho \subset \mathbf{a}_0$ in whose closure $\theta_x$ lies. Then, given a weight $\theta_x$ we choose a sequence of weights $\theta_{x,n}$ lying in our chosen alcove $\mathbf{a}^x_\rho$ and converging to $\theta_x$.
 Thus  the quasi-parabolic structure of $(\cE^{\tt I}(V)_*,\rho(\theta_{x,n}))$ is independent of $n$ and is also independent of the choice of the limiting sequence $\{ \theta_{x,n} \}$. Keeping this quasi-parabolic structure fixed, the weight $\rho(\theta_{x,n}) \in \cA_{T_{SL}}$ equips the vector space $G^j_x$ of the flag at $x$ with a real number $\alpha^j_{x,n}$. We set 
\begin{equation}
\alpha^j_x= lim~\alpha^j_{x,n}.
\end{equation}
By linearity of (\ref{linearapartmentmap}), this definition is independent of the choice of weights $\{\theta_{x,n}\}$ and depends only on $\mathbf{a}^x_\rho$ and $\theta_x$. With notations as above, consider the parabolic vector bundle $(\cE^{\tt I}(V)_*,\rho({\boldsymbol{\theta_n}}))$ associated to ${\boldsymbol{\theta_n}}=\{ \theta_{x,n} \}_{x \in \cR}$. We endow the underlying quasi-parabolic vector bundle with weights $\{\alpha^j_x \}$ and denote this parabolic vector bundle or extended weight torsor as $(\cE^{{\tt I}},{\boldsymbol{\theta}}, \{\mathbf{a}^x_\rho\})$.
\end{defi}

\begin{rem} Say $\cR$ is a single point. A generic point $\boldsymbol{\theta}$ in the interior of the Weyl alcove $\mathbf{a}^x$, lies in a single $\rho$-facet $\mathbf{a}^x_{\rho}$. In this case, $(\cE^{{\tt I}},{\boldsymbol{\theta}}, \{\mathbf{a}^x_\rho\})$ is simply the associated construction $(\cE^{\tt I}(V)_*, \boldsymbol{\theta})$ of \cite{bbp}. Completing flag of \cite{belkale} is extending weights for the case $\theta$ lies in $\overline{\mathbf{a}_0} \setminus \{ \text{far wall} \}$. This is enough to determine the semistability polytope $\Delta^{ss}$.
\end{rem}
\begin{ex} In the context of \cite{belkale}, we have $G=\SL_n$, $\rho=\Id$ and so $\mathbf{a}^x_\rho$ is an alcove of $\SL_n$.  For simplicity let $\cR=\{x\}$. Let $\theta_d$ be the vertex of the alcove where only the affine root $\alpha_d$ does not vanish for $0 \leq d<n$. For $G=\SL_2, \rho=\Id$ and only one parabolic point $x$ consider a weight $\theta \in [0,1)$ and the parabolic vector bundle $V_*$ given by $\cO_{X}^{\oplus 2}$ with one flag at $x$ of weight $\theta$. Doing the extending weight construction for  the pair $(\theta_1, \mathbf{a}^x_\rho:=\mathbf{a}_0)$, taking limit as $\theta$ tends to $\theta_1$, the flag acquires weight one.  For $G=\SL_n, \rho=\Id$  weights $(b_0,b_1 \cdots ,b_n)$ in barycentric coordinates correspond to weights $( 0, b_1, b_1+b_2, \cdots, b_1+\cdots+b_n)$. In particular, for $\mathbf{a}^x_\rho=\mathbf{a}_0$, $\theta_d$ corresponds to vector bundle of degree $-d$. Further, for $d \geq 1$, $\rho(\theta_d)$ does lie on the far wall. Let us constrast with the case when $\rho(\theta_d)$ does {\bf not} lie on the far wall. The extended weight torsor $(\cE^{{\tt I}},{\boldsymbol{\theta}}, \{\mathbf{a}^x_\rho\})$ corresponds to a vector bundle $V \ra X$ of determinant $-d$, and full-flag $0 \subset F^1_x \subset F^2_x \subset \cdots F^n_x=V_x$ at $V_x$ with extended weights $d/n$ and $\rho(\theta_d)$ is {\bf not} on the far wall of $\mathbf{a}^x_\rho$. This happens because the flags are of $V$ which is not a principal $\SL_n$-bundle.
\end{ex}

\begin{rem} We illustrate sliding of weights over the example of $\cR=\{x\}$, $G=SL_3$, $\rho=Id$. Let  $\theta \in \mathbf{a}_0$ tend to a point $\theta_1$ on the far wall with barycentric coordinates $(0,b_1,b_2)$. Hence $b_2=1-b_1$. Doing the extending weight construction for the pair $(\theta_1,\mathbf{a}^x_\rho:=\mathbf{a}_0)$, we see that rank three vector bundle with full flags acquire weights $(0,b_1,b_1+b_2)$. On the other hand, $\theta_1$ corresponds to rank three vector bundles with a single flag of dimension two with weight $1-b_1$. This corresponds to the fact that $\theta_1=(b_1,b_2)$ in the barycentric coordinates of the far wall. The general case of sliding of weights is only notationally harder to write.
\end{rem}

\begin{rem}    We acquire weight one exactly when $\rho(\theta_x)$ lies on the far wall of the facet corresponding to $\mathbf{a}^x_\rho$. 
\end{rem}

\begin{ex} Let $V_*$ be as above. Now $Ad: \SL_2 \rightarrow \SL_3$ corresponds to $V_* \mapsto \Sym^2(V_*)$. Now $\Sym^2(V_*)$ is the PVB with underlying bundle $\cO_X^{\oplus 3}$  with weights $\{\theta, 2 \theta\}$ if $\theta \in [0,1/2)$ or $\cO_{X}^{\oplus 2} \oplus \cO_X(x)$ with weights $\{2\theta-1, \theta \}$ if $\theta \in [1/2,1)$. When $\theta \in [0,1/2)$, then $\Sym^2(V_*)$ corresponds to the alcove $\mathbf{a}_0 \setminus \{\text{far wall} \}$ and thus does not have a map forgetting the flags to torsors on the far wall. When $\theta \in [1/2,1)$, then the underlying degree of $\Sym^2(V_*)$ is not congruent to zero modulo three, and it corresponds to a parahoric $\SL_3$ torsor which maps to torsors on the far wall.  Let $W=\cO_X \oplus \cO_X(x)$. It corresponds to the far wall of $\SL_2$ but $\Sym^2(W)=\cO_X \oplus \cO_X(x) \oplus \cO_X(2x)$ corresponds to an affine Weyl group translate of the origin of $\SL_3$ i.e $\Sym^2(W) \otimes \cO_X(-x)$ is a principal $SL_3$-bundle. We see that $\Sym^2(W)$ and the bundle underlying $\Sym^2(V_*)$ are related by a Hecke-modification when $\theta \in [1/2,1)$.  In this sense as $\theta$ tends to $1$, $V_*$ tends to $W$. This observation is formalized in the proposition below. 
\end{ex}

\begin{prop} \label{ulvbheckemod} The vector bundles underlying $(\cE^{{\tt I}}(V),\rho(\bsm{\theta}))$ and $(\cE(V),\rho(\bsm{\theta}))$ are related by a Hecke-modification and are comparable under inclusion.
\end{prop}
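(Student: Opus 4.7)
The plan is to reduce the proposition to a local lattice comparison at each parabolic point. Away from $\cR$, both $\cE(V)$ and $\cE^{{\tt I}}(V)$ coincide with the vector bundle associated, via $\rho$, to the underlying $G$-bundle on $X \setminus \cR$, so the two parabolic vector bundles can differ only in their integral structure at the points of $\cR$. It therefore suffices to work locally on the formal disc $\mathbb{D}_x$ at one parabolic point $x \in \cR$ and compare two $\hat{\cO}_x$-lattices in $V(K_x)$.

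First I would fix a trivialization of $\cE^{\tt I}$ over $\mathbb{D}_x$ (which exists since a $\cG^{\tt I}$-torsor is a $G$-bundle with a $B$-structure at $x$). Under this trivialization, $\cE^{\tt I}(V)|_{\mathbb{D}_x}$ with its extended-weight structure is identified with a specific lattice $L_{\tt I} \subset V(K_x)$, namely a cocharacter-twist of $V(\hat{\cO}_x)$ prescribed by $\rho(\theta_{x,n})$ for $\theta_{x,n}$ in the interior of $\mathbf{a}^x_\rho$. On the other hand, $\cE(V)|_{\mathbb{D}_x}$ is identified, via the construction of \cite{bbp}, with a second lattice $L_{\cG} \subset V(K_x)$, cut out by the parahoric group scheme $\cG_{\sigma_x}$ acting through $\rho$, and determined by $\rho(\theta_x)$ sitting on the boundary of $\mathbf{a}^x_\rho$. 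Compatibility of the two constructions over the punctured disc $\mathbb{D}_x^\circ = \Spec(K_x)$ guarantees that $L_{\tt I}$ and $L_{\cG}$ span the same $V(K_x)$, so the question of Hecke-modification is purely local at $x$.

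Next I would invoke the alcove-theoretic interpretation of Section \ref{belkaleus}. Moving the weight $\theta_{x,n}$ to its limit $\theta_x$ inside $\ol{\mathbf{a}^x_\rho}$ amounts to traversing a (possibly empty) sequence of codimension-one facets of $\mathbf{a}^x_\rho$. By the dictionary recalled there, crossing a non-far wall corresponds only to forgetting a flag and does not change the underlying vector bundle, while crossing a far wall is realized by a one-step Hecke modification of the underlying vector bundle at $x$. Composing finitely many such steps, one per wall, shows that $L_{\tt I}$ and $L_{\cG}$ differ by a finite sequence of Hecke modifications at $x$; carrying this out at every $x \in \cR$ establishes the first assertion. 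For comparability under inclusion, note that at each far-wall crossing the relevant elementary step, read off from the inclusion $\cG^{\tt I}_{_{\mathbb D}} \hookrightarrow \cG_{_{\mathbb D}}$ of parahoric group schemes, is an inclusion of adjacent vertex lattices in the Bruhat--Tits building, consistent throughout by the path-independence noted in Section \ref{belkaleus}. Composing monotone inclusions gives one inclusion of $L_{\tt I}$ into $L_{\cG}$ (or the reverse, depending on which wall is being crossed), which is exactly the claimed comparability.

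The main obstacle I foresee is the bookkeeping: one must simultaneously keep track of the cocharacter-twist that defines $L_{\tt I}$, of the chain of facets of $\mathbf{a}^x_\rho$ whose interiors contain the limiting weights $\theta_{x,n}$ and whose closure contains $\theta_x$, and of the normalization used by \cite{bbp} so that sliding weights and sliding lattices are compatible in sign. Verifying that each far-wall crossing genuinely produces an inclusion of lattices (rather than a more general Hecke transform) is the essential technical point; this is where the commutativity of Hecke modifications by skyscraper sheaves with forgetting of flags, established in Section \ref{belkaleus}, is used in an essential way.
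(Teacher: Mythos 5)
Your proposal is correct and follows essentially the same route as the paper: the key observation in both is that the weights $\rho(\theta_{x,n})$ all lie in a single fixed facet of the apartment of $SL(V)$ whose closure contains $\rho(\theta_x)$, so the passage between the two quasi-parabolic structures is exactly a Hecke-modification morphism in the sense of Diagram \eqref{Heckediag} and \S\ref{belkaleus}, under which underlying bundles differ by a Hecke modification (or not at all, when only flags are forgotten) and are comparable under inclusion. The only cosmetic difference is that you decompose the degeneration into codimension-one wall crossings and argue on local lattices, whereas the paper invokes the single one-step morphism of stacks directly; these are equivalent by the path-independence of flag-forgetting and skyscraper Hecke modifications that you also cite.
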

\begin{proof} By construction, the quasi-parabolic structure of $(\cE^{{\tt I}}(V),\rho(\bsm{\theta}))$ is the same as that of $(\cE^{{\tt I}}(V),\rho(\bsm{\theta_n}))$. Notice that for each $x \in \cR$ the weights $\{\rho(\theta_{x,n})\}$ lie in a fixed facet $\sigma_{SL}^x$ of $SL(V)$ in whose closure $\rho(\theta_{x})$ lies. So considering the quasi-parabolic structures associated to these weights, we are in the situation of a {\it single Hecke-modification morphism} between the  stacks associated to $\{\sigma^x_{SL}\}_{x \in \cR}$ and  $\bsm{\theta}$ as in Diagram  \ref{Heckediag}.  Equivalently we have a morphism of stacks of quasi-parabolic vector bundles 
\begin{equation} \label{qpvb} QPVB(\rho(\theta_{x,n}), x \in \cR) \ra QPVB(\rho(\theta_x),x \in \cR).
\end{equation}
Under this morphism, the underlying vector bundles are related by a Hecke modification (or are the same if the morphism (\ref{qpvb}) corresponds merely to forgetting flags) and are comparable under inclusion. 
\end{proof}


\begin{rem} \label{refinement}
In the following proposition we show that the filtration (\ref{filtration}) of $(\cE(V)_*,\rho(\bsm{\theta}))$ is refined by that of $(\cE^{{\tt I}},{\boldsymbol{\theta}}, \{\mathbf{a}^x_\rho\})$ upto (possibly) shifting of indices, the weights are also preserved upto sliding by a real number and it forgets only those sheaves which do not matter for parabolic degree computations of these bundles as well as their sub-bundles as it happens in (\ref{pardegsubsheaf}).   We assume that $\cR=\{x\}$ because the following argument can be applied point by point in the general case.
\end{rem}

\begin{prop} \label{repweightRfil} Assume $\cR=\{x\}$. Let $U$ (resp $U^{\tt I}$) be the vector bundles underlying  $(\cE(V)_*,\rho(\bsm{\theta}))$ (resp. $(\cE^{{\tt I}},{\boldsymbol{\theta}}, \{\mathbf{a}^x_\rho\})$). Consider the possibly partial filtration (\ref{filtration}) of $(\cE^{{\tt I}},{\boldsymbol{\theta}}, \{\mathbf{a}^x_\rho\})$ along $x$. Then $U$ is the $m$-term for $0 \leq m \leq$ the dimension of the facet of $\SL(V)$ containing $\rho(\mathbf{a}^x_\rho)$. If we forget sheaves for which there is a bigger sheaf with the same weight, then if we shift indices and slide weights so that $U$ has index and weight zero, then we get the (\ref{filtration}) of $(\cE(V)_*,\rho(\bsm{\theta}))$ along $x$.
\end{prop}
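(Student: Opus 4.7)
The plan is to combine Proposition \ref{ulvbheckemod} with the alcove-theoretic dictionary of \S\ref{belkaleus} and unwind Definition \ref{partorextendedweights}. First, I would choose a sequence $\{\theta_n\} \subset \mathbf{a}^x_\rho$ converging to $\theta$. By Definition \ref{partorextendedweights}, the underlying quasi-parabolic vector bundle $U^{{\tt I}}$ of $(\cE^{{\tt I}}, \theta, \{\mathbf{a}^x_\rho\})$ agrees with that of $(\cE^{{\tt I}}(V), \rho(\theta_n))$ for every $n$ and carries a full flag at $x$; only the weights slide continuously to $\rho(\theta)$ in the limit. The $\ZZ$-filtration (\ref{nfiltration}) of this parabolic bundle at $x$ is precisely the one produced by the apartment walk of \S\ref{belkaleus}: consecutive terms $E_m \supset E_{m+1}$ correspond to crossing walls of the fundamental $\SL(V)$-alcove, a non-far wall crossing merely forgetting one flag (so $E_m = E_{m+1}$ as vector bundles) and a far wall crossing performing a Hecke modification by a skyscraper sheaf.

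Next, I would locate $U$ on this filtration. Let $\sigma$ denote the facet of $\SL(V)$ containing $\rho(\mathbf{a}^x_\rho)$; this is well-defined by (\ref{facetdefng}), since by design all of $\mathbf{a}^x_\rho$ maps under $\rho$ into a single open $\SL(V)$-facet. By Proposition \ref{ulvbheckemod}, $U$ and $U^{{\tt I}}$ are related by a single Hecke modification coming from the morphism (\ref{qpvb}) of quasi-parabolic stacks, which in the apartment $\cA_{T_{SL}}$ corresponds to the passage from the fundamental Iwahori alcove down to $\sigma$. This path traverses at most $\dim \sigma$ walls; by \S\ref{belkaleus} each crossing is either a flag-forgetting step or a skyscraper Hecke modification, and only the latter contributes a strict inclusion at the level of underlying vector bundles. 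Consequently $U$ is identified with some term $F_m(U^{{\tt I}})$ in the $\ZZ$-filtration, with $0 \le m \le \dim \sigma$.

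For the last assertion, the flag structure of $(\cE(V)_*, \rho(\theta))$ at $x$ is the coarsening of the full flag of $U^{{\tt I}}$ obtained by forgetting those subsheaves in the $\ZZ$-filtration which share a weight with a strictly larger term---precisely the indexing convention built into (\ref{indexing2}); after shifting indices so that $U = F_m$ sits at index $0$ and sliding the corresponding weight to $0$, the filtration (\ref{filtration}) of $(\cE(V)_*, \rho(\theta))$ emerges verbatim. The main obstacle is the bookkeeping required to match walls of the $\SL(V)$-alcove crossed along the path with either flag-forgetting operations or skyscraper Hecke modifications, and in particular to verify that the count of strict inclusions equals the number of far-wall crossings and is hence bounded by $\dim \sigma$; this rests on the linearity of $\rho$ on apartments (\ref{linearapartmentmap}) together with the combinatorics of $\rho$-facets.
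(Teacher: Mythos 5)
Your argument follows the paper's proof essentially verbatim: Proposition \ref{ulvbheckemod} places $U$ as a term of the $\ZZ$-filtration of $(\cE^{{\tt I}},\boldsymbol{\theta},\{\mathbf{a}^x_\rho\})$, the dimension of the facet $\sigma$ of $\SL(V)$ containing $\rho(\mathbf{a}^x_\rho)$ bounds the index, and discarding the repeated-weight subsheaves followed by shifting indices and sliding weights recovers the filtration of $(\cE(V)_*,\rho(\boldsymbol{\theta}))$. One small slip in your wall-counting: the number of walls separating the Iwahori alcove from $\sigma$ is the codimension of $\sigma$, not $\dim\sigma$; the bound $0\le m\le\dim\sigma$ instead comes, as in the paper, from the fact that the single Hecke modification of Proposition \ref{ulvbheckemod} traps $U$ between $U^{{\tt I}}$ and $U^{{\tt I}}(-x)$, where the number of distinct filtration terms is governed by the flag length $l_x=\dim\sigma$.
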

\begin{proof}  The underlying bundle is the $0$-th term of (\ref{filtration}). Further $U$ is a term in the infinite filtration of $(\cE^{{\tt I}},{\boldsymbol{\theta}}, \{\mathbf{a}^x_\rho\})$ because it is related to $U^{\tt I}$ by a Hecke-modification while being comparable to it under inclusion by Proposition \ref{ulvbheckemod}. It is the $0$-term if and only if $U$ is obtained from $U^{\tt I}$ {\it only by forgetting flags}, but if $U$ is obtained by a non-trivial Hecke-modification of vector bundles, then $U$ will be  the $m$-th term for $m \geq 1$ as described above because $U$ may correspond to any flag at $x$ and the flag length $l_x$ is given by the dimension of the facet of $\SL(V)$ containing $\rho(\mathbf{a}^x_\rho)$.  We slide the weights to make $U$ have weight zero. Under the morphism $\cM_X(\mathcal{G}^{\mathtt I}) \rightarrow \cM_X(\cG)$ for each $x \in \cR$ depending on the facet in $\cA_{T_{SL(V)}}$ containing $\rho(\theta_{x})$ we forget the corresponding terms in the filtration. These are exactly the sheaves in the infinite filtration for which there is a bigger sheaf with the same weight. In particular, we forget all subsheaves of $U$ containing $U(-x)$ which are different from these and which have weight zero or one. Now $U_*$ has weights lying in $[0,1)$ because the formation of $U_*$ ignores $U(-x)$. Thus $U_*$ is the parabolic vector bundle corresponding to $(\cE(V)_*,\rho(\bsm{\theta}))$. Further, we get the filtration corresponding to 
$(\cE(V)_*,\rho(\bsm{\theta}))$ if we shift indices so that the index of $U$ becomes zero.  
\end{proof}

\section{(Semi)-stability of extended weight torsors} \label{ssrwt}
The aim of this section is to compare the 
 (semi)stability of $(\cE, \boldsymbol\theta)$ and that of $(\cE^{{\tt I}},{\boldsymbol{\theta}}, \{\mathbf{a}^x_\rho\})$ as defined in \eqref{partorextendedweights}. We work with $\rho = Ad$ in this section. For each $x \in \cR$, we will fix an $Ad$-facet $\mathbf{a}^x_{Ad}$ in whose closure $\theta_x$ lies.
 


\subsubsection{Definition}\label{parstab} Let us recall  the definition of (semi)stability from \cite[\S 6.1 BBP]{bbp} for a parahoric torsor $(\cE,{\boldsymbol{\theta}})$. Let $Ad: G \ra \SL(\gfr)$ be the ``adjoint" representation. Let $\cE(\gfr)_* = (\cE(\gfr), \boldsymbol\theta_{_{\gfr}})$ denote the associated parabolic vector bundle (\cite[\S 5]{bbp}). Let $\cE(\cG)$ denote the adjoint group  scheme of $\cE$ obtained by taking the quotient of $\cE \times \cG$ by the right $\cG$-action on $\cE$ and the left $\cG$ action by conjugation on itself.  The Lie algebra bundle $\text{Lie}(\cE(\cG))$ is given the structure of a parabolic Lie algebra bundle by identifying $\text{Lie}(\cE(\cG))$ with the vector bundle underlying $\cE(\gfr)_*$ (\cite[\S 6]{bbp}). 


Let $\eta$ be the generic point of the curve $X$. Let $\cE(\cG)_{\eta}$ denote the restriction of $\cE(\cG)$ to $\eta$. Let $\cP_\eta \subset \cE(\cG)_\eta$ be a parabolic subgroup scheme.  Taking the flat closure of $\cP_\eta$ in $\cE(\cG)$ we get the subgroup scheme $
\cP \subset \cE(\cG)$. The Lie algebra bundle $\text{Lie}(\cP)$ is a sub-bundle of $\text{Lie}(\cE(\cG))$, and we give it the {\it canonical induced parabolic structure} to get a parabolic Lie subbundle $\text{Lie}(\cP)_*$ of $\cE(\gfr)_*$. We recall \cite[Defn 6.2]{bbp}
\begin{defi} One calls a parahoric torsor $(\cE,{\boldsymbol{\theta}})$ (semi)stable if for every generic parabolic subgroup scheme $\cP_\eta \subset \cE(\cG)_\eta$ as above, we have  
\begin{equation*} 
\text{pardeg} (\text{Lie}(\cP)_*) \leq 0 ~~~ (\text{respectively}~~~ \text{pardeg} (\text{Lie}(\cP)_*) < 0).
\end{equation*}
\end{defi}

We now want to extend the definition of (semi)stability for extended weights. 
Let $(\cE,{\boldsymbol{\theta}})$ be a parahoric torsor. Under $p: \cM_X(\cG^{{\tt I}}) \ra \cM_X(\cG)$, let $\cE^{{\tt I}}$ map to $\cE$. Let $\cE^{{\tt I}}(\cG^{{\tt I}})$ denote the quotient of $\cE^{{\tt I}} \times \cG^{{\tt I}}$ by the right action of $\cG^{{\tt I}}$ on $\cE^{{\tt I}}$ and the left action of $\cG^{{\tt I}}$ on itself by conjugation. 

\begin{prop} The vector bundle underlying $(\cE^{{\tt I}},\boldsymbol{\theta_\gfr}, \{\mathbf{a}^x_{Ad} \})$ identifies naturally with the Lie algebra bundle $\text{Lie}(\cE^{{\tt I}}(\cG^{{\tt I}}))$ of $\cE^{{\tt I}}(\cG^{{\tt I}})$.
\end{prop}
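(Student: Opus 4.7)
The identification is local on $X$, and the plan is to reduce the claim to the standard BBP identification of associated Lie algebra bundles applied with weights in the interior of the Iwahori alcove. Over $X^\circ = X \setminus \cR$ both sides restrict to the adjoint bundle of the underlying $G$--bundle, so the content of the assertion sits around each $x \in \cR$.

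Fix such an $x$ and choose, as in Definition \ref{partorextendedweights}, a sequence $\theta_{x,n} \to \theta_x$ with $\theta_{x,n}$ lying in the open $Ad$-facet $\mathbf{a}^x_{Ad}$. Because $\mathbf{a}^x_{Ad} \subset \mathbf{a}_0$, every $\theta_{x,n}$ lies in the interior of the Iwahori alcove, so the parahoric group scheme naturally attached to its facet is $\cG^{\tt I}$ itself. I would then invoke the BBP identification \cite[\S 5, \S 6]{bbp}: for the $\cG^{\tt I}$-torsor $\cE^{\tt I}$ with interior weight $\theta_{x,n}$, the vector bundle underlying the associated adjoint parabolic Lie algebra bundle $(\cE^{\tt I}(\gfr)_*, Ad(\theta_{x,n}))$ is canonically $\text{Lie}(\cE^{\tt I}(\cG^{\tt I}))$. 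Locally around $x$ this comes down to recognising the $\cE^{\tt I}$-twist of the lattice $\text{Lie}(\mathtt I) \subset \gfr \otimes K_x$, which glues globally since $\cE^{\tt I}$ trivialises on the formal disc.

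The identification with the extended weight construction is then purely formal. By Definition \ref{partorextendedweights}, the underlying quasi-parabolic vector bundle of $(\cE^{\tt I}, \boldsymbol{\theta_\gfr}, \{\mathbf{a}^x_{Ad}\})$ is precisely that of $(\cE^{\tt I}(\gfr)_*, Ad(\theta_{x,n}))$ for any $n$; the extended weight recipe only re-labels the weights by the limits $\alpha^j_x = \lim_n \alpha^j_{x,n}$ obtained via sliding, and does not alter the underlying bundle. Composing the two identifications yields the desired natural isomorphism.

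The main point requiring care is independence of the identification from the auxiliary choice of $\theta_{x,n}$. This is secured by the defining property of an $Ad$-facet: the image $Ad(\mathbf{a}^x_{Ad})$ lies in a single open facet of the apartment of $\SL(\gfr)$, so both the quasi-parabolic structure and the lattice that underlies it remain constant as $\theta_{x,n}$ varies in $\mathbf{a}^x_{Ad}$. Proposition \ref{ulvbheckemod} supplies the complementary consistency, namely that any passage to the non-Iwahori side would incur at most a Hecke modification, which does not affect the identification carried out on the Iwahori side.
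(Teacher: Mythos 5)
Your proposal is correct and follows essentially the same route as the paper's own proof: choose a sequence $\theta_{x,n}\in\mathbf{a}^x_{Ad}\subset\mathbf{a}_0$ converging to $\theta_x$, observe that the underlying quasi-parabolic bundle of the extended-weight object coincides with that of $(\cE^{\tt I}(\gfr)_*,Ad(\theta_{x,n}))$ and is independent of $n$, and invoke the identification of \cite[\S 6]{bbp} for interior (Iwahori) weights. The extra remarks on localization and on independence of the auxiliary sequence are consistent elaborations of the same argument rather than a different approach.
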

\begin{proof} For simplicity, we may suppose that $\cR=\{x\}$. Now we take a sequence of weights $\theta_{x,n}$ lying in our chosen alcove $\mathbf{a}^x_{Ad}$ and converging to $\theta_x$.  By definition, the quasi-parabolic structure  of $(\cE^{{\tt I}},\boldsymbol{\theta_\gfr}, \{\mathbf{a}^x_{Ad} \})$ and $(\cE^{{\tt I}}(\gfr)_*,Ad(\theta_{x,n}))$ are same. By construction, the vector bundle underlying $(\cE^{{\tt I}}(\gfr)_*, Ad(\theta_{x,n}))$ is independent of $n$. Further, since $\theta_{x,n}$ lies inside $\mathbf{a}_0$, it identifies with $\text{Lie}(\cE^{{\tt I}}(\cG^{{\tt I}}))$ by the well-known identification (cf (\cite[\S 6]{bbp})).
\end{proof}
We induce the parabolic structure on $(\cE^{{\tt I}},\boldsymbol{\theta_\gfr}, \{\mathbf{a}^x_{Ad} \})$ to $\text{Lie}(\cE^{{\tt I}}(\cG^{{\tt I}}))$ and denote it by $\text{Lie}(\cE^{{\tt I}}(\cG^{{\tt I}}))_*$. Let us consider a generic parabolic subgroup scheme $\cP_{\eta} \subset \cE^{{\tt I}}(\cG^{{\tt I}})_{\eta}$. We take the flat closure of $\cP_\eta$ in $\cE^{{\tt I}}(\cG^{{\tt I}})$ to get a subgroup scheme $\cP^{\tt I} \subset \cE^{{\tt I}}(\cG^{{\tt I}})$. Consider the Lie algebra subbundle $\text{Lie}(\cP^{\tt I}) \subset \text{Lie}(\cE^{{\tt I}}(\cG^{{\tt I}}))$.  We endow $\text{Lie}(\cP^{\tt I})$ with the {\it canonical induced parabolic structure} and denote the associated parabolic vector bundle by $\text{Lie}(\cP^{\tt I})_*$.
 
\begin{defi} \label{defnExtendingsemistability} Let $(\cE,{\boldsymbol{\theta}})$ be a parahoric torsor. Under $p: \cM_X(\cG^{{\tt I}}) \ra \cM_X(\cG)$, let $\cE^{{\tt I}}$ map to $\cE$. We say that the parahoric torsor with extended weights $(\cE^{{\tt I}}, {\boldsymbol{\theta}},\{\mathbf{a}^x_{Ad} \})$ is (semi)stable if the following holds for every generic parabolic subgroup scheme $\cP_\eta \subset \cE^{{\tt I}}(\cG^{{\tt I}})_{\eta}$: with notations as above we have
\begin{equation}
\text{pardeg}(\text{Lie}(\cP^{\tt I})_*) \leq 0 \quad (\text{respectively} \quad \text{pardeg}(\text{Lie}(\cP^{\tt I})_*) <0 ).
\end{equation}
\end{defi}
\begin{rem} \label{ustotw} The (semi)stability Definition  \cite[Defn 2.2]{tw} is for the product of alcoves without their far walls. It may be naturally extended to the product of closed Weyl alcoves. It agrees with the Definition \ref{defnExtendingsemistability}.
\end{rem}
\begin{rem} Sliding of weights does not alter the difference of the parabolic slopes of a parabolic vector bundle and its sub-bundle. This applies in particular to parabolic torsors and extended weight torsors.
\end{rem} 
\begin{prop} \label{stabilityExtendingweights} We have $(\cE,{\boldsymbol{\theta}})$ is stable if and only if $(\cE^{{\tt I}}, {\boldsymbol{\theta}}, \{\mathbf{a}^x_{Ad} \})$ is stable as a extended weight parahoric torsor. The stability of  $(\cE^{{\tt I}}, {\boldsymbol{\theta}}, \{\mathbf{a}^x_{Ad} \})$ is independent of the choices of $\{\mathbf{a}^x_{Ad} \}$. 
\end{prop}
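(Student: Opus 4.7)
The plan is to reduce both stability conditions to the same numerical criterion on generic parabolic subgroup schemes, and then invoke the key parabolic-degree preservation fact from \S\ref{pardegsubsheaf} together with Proposition~\ref{repweightRfil}. First, observe that $\cG$ and $\cG^{{\tt I}}$ agree on $X^\circ = X\setminus\cR$, both being canonically the constant group scheme $X^\circ\times G$. Consequently the adjoint group schemes $\cE(\cG)$ and $\cE^{{\tt I}}(\cG^{{\tt I}})$ coincide on $X^\circ$, and in particular at the generic point $\eta$ there is a canonical identification $\cE(\cG)_\eta \cong \cE^{{\tt I}}(\cG^{{\tt I}})_\eta$. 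Hence generic parabolic subgroup schemes $\cP_\eta \subset \cE(\cG)_\eta$ are in canonical bijection with $\cP_\eta^{{\tt I}} \subset \cE^{{\tt I}}(\cG^{{\tt I}})_\eta$, and taking flat closures yields subgroup schemes $\cP\subset \cE(\cG)$ and $\cP^{{\tt I}}\subset \cE^{{\tt I}}(\cG^{{\tt I}})$ that agree over $X^\circ$ and differ only at the marked points in $\cR$.

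The core step is the comparison of parabolic degrees
\[
\text{pardeg}(\text{Lie}(\cP)_*) \;=\; \text{pardeg}(\text{Lie}(\cP^{{\tt I}})_*),
\]
which I would establish point by point at each $x\in\cR$ by applying Proposition~\ref{repweightRfil} to the $\rho=\mathrm{Ad}$ construction. Indeed, the vector bundle underlying $(\cE^{{\tt I}},\boldsymbol\theta_{\gfr},\{\mathbf{a}^x_{Ad}\})$ is identified with $\text{Lie}(\cE^{{\tt I}}(\cG^{{\tt I}}))$, and its local filtration at $x$ refines (up to shifting indices and sliding weights) the filtration associated to $\cE(\gfr)_*$; the extra terms are exactly those which carry a repeated weight and therefore do not affect parabolic degree computations. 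The flat-closure subgroup schemes $\cP$ and $\cP^{{\tt I}}$ induce canonical parabolic sub-filtrations of $\cE(\gfr)_*$ and of $\text{Lie}(\cE^{{\tt I}}(\cG^{{\tt I}}))_*$ respectively, and by \S\ref{pardegsubsheaf} (Belkale's degree-preservation under the passage to a finer $\mathbb Z$-filtration with sliding of weights, together with Hecke modifications by skyscraper sheaves on the far wall) the parabolic degrees of these sub-bundles coincide.

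Combining the bijection of generic parabolic subgroup schemes with the equality of parabolic degrees immediately yields that $\text{pardeg}(\text{Lie}(\cP)_*)<0$ holds for all $\cP_\eta$ if and only if $\text{pardeg}(\text{Lie}(\cP^{{\tt I}})_*)<0$ holds for all $\cP_\eta^{{\tt I}}$, which is exactly the equivalence of stability in Definition~\ref{defnExtendingsemistability} and in \S\ref{parstab}. For the independence assertion, the stability of $(\cE,\boldsymbol\theta)$ does not refer to the facets $\{\mathbf{a}^x_{Ad}\}$ at all, so the first half of the proposition already forces the stability of $(\cE^{{\tt I}},\boldsymbol\theta,\{\mathbf{a}^x_{Ad}\})$ to be independent of that choice; alternatively, two different $\mathrm{Ad}$-facets in whose closure $\theta_x$ lies produce extending-weight constructions related by further flag forgettings and skyscraper Hecke modifications of the kind covered by \S\ref{belkaleus}, none of which changes parabolic degrees.

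The main obstacle I anticipate is the bookkeeping step verifying that flat closure of a generic parabolic subgroup scheme is compatible with the extending-weights construction on the Lie algebra level: namely, that $\text{Lie}(\cP^{{\tt I}})_*$ is genuinely obtained from $\text{Lie}(\cP)_*$ by the same refining-the-filtration/sliding-the-weights procedure that relates $\cE(\gfr)_*$ to $(\cE^{{\tt I}},\boldsymbol\theta_{\gfr},\{\mathbf{a}^x_{Ad}\})$. This requires tracking, at each $x$, how the induced parabolic structure interacts with the Hecke-modification of Proposition~\ref{ulvbheckemod}, ruling out the possibility that the induced structure on $\text{Lie}(\cP^{{\tt I}})$ is strictly coarser than the one predicted by the extending-weights recipe; once this compatibility is pinned down, Proposition~\ref{repweightRfil} and \S\ref{pardegsubsheaf} do the rest.
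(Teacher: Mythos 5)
Your proposal follows essentially the same route as the paper: identify $\cE(\cG)_\eta$ with $\cE^{{\tt I}}(\cG^{{\tt I}})_\eta$ to match generic parabolic subgroup schemes, then compare the parabolic degrees of the flat closures via Proposition~\ref{ulvbheckemod}, Remark~\ref{refinement} and the degree-preservation of \S\ref{pardegsubsheaf}. The only (harmless) divergence is that you assert literal equality of parabolic degrees, whereas the paper tracks a uniform sliding constant $a$ and compares slope \emph{differences} between the sub-object and the ambient Lie algebra bundle, which is the cleaner normalization when the slid weights change the total degree.
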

\begin{proof} By Proposition \ref{ulvbheckemod}, the Lie algebra bundles $\text{Lie}(\cE^{{\tt I}}(\cG^{{\tt I}}))$ and $\text{Lie}(\cE(\cG))$, being the vector  bundles underlying $(\cE^{{\tt I}}(\gfr), \boldsymbol\theta_{_{\gfr}})$ and $(\cE(\gfr), \boldsymbol\theta_{_{\gfr}})$ respectively, are related by a Hecke-modification and they are comparable under inclusion. Sliding of weights by a real number $a$ only changes the parabolic slope by $a$. So by Remark \ref{refinement} we have $par \mu (\text{Lie}(\cE^{{\tt I}}(\cG^{{\tt I}}))_*)-par \mu (\text{Lie}(\cE(\cG))_*)=a$ where we slide weights by $a$.

We also have a natural identification of $\cE^{{\tt I}}(\cG^{{\tt I}})_\eta$ with $\cE(\cG)_\eta$. Let us consider a generic parabolic subgroup scheme $\cP_{\eta} \subset \cE^{{\tt I}}(\cG^{{\tt I}})_{\eta}= \cE(\cG)_{\eta}$. 

We have the natural identification $\text{Lie}(\cP^{\tt I})_{\eta} =\text{Lie}(\cP^{\tt I}_{\eta})= \text{Lie}(\cP_{\eta})=\text{Lie}(\cP)_{\eta}$.  The parabolic structure on  $\text{Lie}(\cP^{\tt I})_*$ is induced from $\text{Lie}(\cE^{{\tt I}}(\cG^{{\tt I}}))_*$. So from the infinite filtration of  $\text{Lie}(\cP^{\mathbf{a}})_*$ we can extract like Remark \ref{refinement} the infinite filtration of  $\text{Lie}(\cP)_*$ by forgetting only those sheaves which do not matter for parabolic degree computations.  Thus $par \mu (\text{Lie}(\cP^{\tt I})_*)-par \mu (\text{Lie}(\cP)_*)=a$.

This shows that $(\cE,{\boldsymbol{\theta}})$ is stable if and only if $(\cE^{{\tt I}}, {\boldsymbol{\theta}}, \{\mathbf{a}^x_{Ad} \})$ is stable. Now the second statement follows.
\end{proof}

\section{Some deformation theory and the Proof of Theorem \ref{mt2}} \label{deformation}

Let $\cE^{I}$ be a trivial bundle with generic $B$-structures and weight $\boldsymbol{\theta^{ext}}$  corresponding to $\boldsymbol{\theta}$ as in Theorem \ref{mt}. In \S \ref{Extendingweights}, we have explained the construction of the parabolic vector bundle $V_*$ associated to it by the $Ad$ representation.
Let $(\cE,{\boldsymbol{\theta}})$ be a parahoric $\cG$-torsor on $\mathbb P^1$ with parahoric structure on the marked points $\cR$. Recall that the first order deformations of $(\cE,{\boldsymbol{\theta}})$ are controlled by the cohomology space  $H^1(\mathbb P^1, \cE(\mathfrak g))$, where $\cE(\mathfrak g)$ is the Lie-algebra bundle underlying the parabolic bundle $\cE(\mathfrak g)_*$. 

We further observe that if $\cE$ is a torsor for the Iwahori group scheme, then there is an underlying principal $G$-bundle with standard parabolic structures with fibres $B$ at the marked points. Under these conditions, the (semi)stability conditions in \ref{parstab} can be rephrased in terms of the usual (semi)stability of principal $G$-bundles but carrying along the Iwahori structures. In other words, $\cE$ is (semi)stable if and only if for every parabolic subgroup $P \subset G$ and reduction of structure group $\cE_P$ to $P$, we have $pardeg(\cE_P(\mathfrak p)_*) \leq 0 (< 0)$, where $\cE_P(\mathfrak p)_* \subset \cE(\gfr)_*$ gets the canonical induced parabolic structure. This definition coincides with the one in \cite{tw}. 

With these notions in place, we now complete the proof of Theorem \ref{mt2}.

\begin{proof}[Proof of Theorem \ref{mt2}] $\implies$ If the point $\theta$ lies in the stable polytope, then  by Theorem \ref{mt}, the trivial bundle with generic $B$-structures $\cE^{I}$ with weights $\mu$ is stable. So it has no sub-bundles whose associated parabolic vector bundle has degree zero owing to stability. 

$\impliedby$ Since $\boldsymbol{\theta} \in \Delta^{ss}$, so by \cite{tw} the trivial bundle  with generic $B$-structures $\cE^{I}$ is $\mu$-semistable. Now the stack of $B$-structures on the trivial bundle is smooth and irreducible. Therefore, we have a smooth and irreducible versal space $T$ and let $\cE$ be a versal torsor. If at the generic point of $T$   the torsor $\cE$ is only semistable and not stable, then    
in the setting of \ref{parstab} the family $\cE_{t}(\gfr)_*$ of torsors would admit  parabolic reductions $\cE_P(\mathfrak p)_{t,*}$  with induced parabolic structures such that $\text{pardeg}(\cE_P(\mathfrak p)_{t,*}) = 0$.

At the point $t \in T$ corresponding to $\cE^{I}$ with generic $B$-structures, we consider the map from \{{\sl deformations of $\cE^{\tt I}_{_P}$}\}  to \{{\sl deformations of $\cE^{I}$}\}.  Observe that we have the following exact sequence of parabolic bundles:
\begin{equation} \label{deformextseq}
0 \to \cE^{\tt I}_{_P}(\mathfrak p)_{*} \to \cE^{I}(\gfr)_* \to \cE^{I}(\gfr)_*/\cE_P(\mathfrak p)_{*}\to 0
\end{equation}
By assumption on $\cE^{I}$, no $P$-reduction $\cE^{\tt I}_{_P}$ is of the {\sl minus 1 type}. So the quotient $\cE^{I}(\gfr)_*/\cE_P(\mathfrak p)_{*}$ has $\text{pardeg} = 0$  but the degree of the underlying vector bundle is {\bf not $-1$}. Hence it has a non-trivial $H^1$. This implies that  the the map \{{\sl deformations of $\cE^{\tt I}_{_P}$}\}  to \{{\sl deformations of $\cE^{I}$}\} is {\em not surjective}.  In other words, $\cE^{\tt I}_{_P}$ does not deform to the generic point of the versal space. And this holds for all parabolic reductions $\cE^{\tt I}_{_P}$. So, at the generic point $\eta$ of $T$ the torsor $\cE_{\eta}$ has no destabilizing $\cE^{\tt I}_{P,\eta}$ arising as a deformation from a closed point of $T$. On the other hand, any $\cE^{\tt I}_{P,\eta}$ of $\cE_{\eta}$ spreads to a Zariski neighbourhood of  
$\eta$ in $T$. So $\cE$ must be {\sl stable}. Hence $\boldsymbol{\theta} \in \Delta^{s}$.


\end{proof}
\begin{Cor}\label{newvercrit} The above theorem gives a verifiable criteria for $\Delta^{s}$.
\end{Cor}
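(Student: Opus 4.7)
The plan is to translate the obstruction condition of Theorem \ref{mt2} into a finite combinatorial check phrased in Schubert calculus / Gromov-Witten terms, paralleling how Corollary \ref{stablepolytope} is derived from Theorem \ref{mt}. The first step is the standard identification: a $P$-reduction $\cE^{\tt I}_{_P}$ of the trivial $G$-bundle on $\PP^1$ corresponds bijectively to a morphism $\phi: \PP^1 \to G/P$, with the degree of $\phi$ (an element of $H_2(G/P,\ZZ)\cong \ZZ$ for $P$ maximal) determining the degree of the associated subbundle and of the quotient. For a generic choice of $B$-structures at the points $x \in \cR$, the position of $\phi(x)$ relative to the generic $B$-flag lies in a Schubert cell indexed by an element $w_x \in W/W_P$.

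Next, I would re-express the \emph{minus $1$ type} condition numerically. The parabolic degree of $\cE^{\tt I}(\gfr)_*/\cE^{\tt I}_{_P}(\pfr)_*$ is a linear function of the data $(d, \{w_x\}, \boldsymbol{\theta^{ext}})$, with the term $\sum_x (w_x \omega_P, \theta_x) - d$ playing the role of the slope as in \cite{tw} (cf.\ Corollary \ref{stablepolytope}). The \emph{parabolic degree zero} requirement then fixes $d$ as an explicit affine function of $\boldsymbol{\theta^{ext}}$ and $\{w_x\}$; the additional requirement that the underlying vector bundle has ordinary degree $-1$ fixes a second integer invariant determined by the same data. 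Thus, for each maximal parabolic $P$, the \emph{minus $1$ type} reductions appearing in Theorem \ref{mt2} are parametrised by the finitely many triples $(P, d, \{w_x\})$ satisfying these two linear constraints.

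The third step is to replace the existence question for such a reduction by a Gromov-Witten count. Because $\boldsymbol\theta$ is fixed and the $B$-structures are generic, the number of morphisms $\phi:\PP^1 \to G/P$ of degree $d$ sending each $x \in \cR$ into the corresponding generic translate of the Schubert variety $Y_{w_x}$ is precisely the Gromov-Witten invariant $n_d(w_x \mid x \in \cR)$ from \eqref{GWinv}. A \emph{minus $1$ type} $P$-reduction exists if and only if $n_d(w_x \mid x \in \cR) \neq 0$ for some admissible triple $(P,d,\{w_x\})$ satisfying the two numerical constraints identified above.

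Combining these three steps, Theorem \ref{mt2} rewrites $\Delta^s$ as the subset of $\Delta^{ss}$ cut out by the finitely many inequalities coming from the vanishing of such Gromov-Witten numbers, which by \cite{tw} are algorithmically computable. This finiteness is exactly what is needed for a verifiable criterion. The principal technical point — essentially the whole substance of the argument — is verifying that the dictionary between $P$-reductions of the trivial bundle and maps to $G/P$ correctly translates both the parabolic degree condition and the underlying degree $-1$ condition into Schubert-position data; once this is in place, the rest is formal. I expect this translation of the two conditions into one equation on $d$ and one on the positions $w_x$ to be the only step that requires genuine checking.
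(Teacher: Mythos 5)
Your proposal is correct and follows essentially the same route as the paper: identify $P$-reductions of the trivial bundle with generic $B$-structures with Gromov--Witten input data $(d, w_x \mid x\in\cR)$, observe that the parabolic-degree-zero and underlying-degree-$(-1)$ conditions of the \emph{minus 1 type} are determined by this finite data together with $\boldsymbol{\theta^{^{ext}}}$, and conclude that the obstruction in Theorem \ref{mt2} is the non-vanishing of finitely many computable Gromov--Witten numbers $n_d(w_x\mid x\in\cR)$. The paper's own proof is terser but makes exactly these points, so no further comparison is needed.
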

\begin{proof} For parabolic vector bundles, one considers the trivial bundle with generic quasi-parabolic structures of a fixed type. Then Gromov-Witten input data exactly corresponds to the triples of rank, degree and type of induced parabolic structure on sub-bundles. Recall that this generalizes suitably also for principal $G$-bundles as in \cite{tw}.  Consider any $P$-reduction of structure group $\cE^{\tt I}_{_P}$ of $\cE^{\tt I}$. By associated constructions, it leads to a PVB sub-bundle like $\cE_P(\mathfrak p)_{*}$ in (\ref{deformextseq}). Notice that
the local contributions to the parabolic degree of $\cE_P(\mathfrak p)_{*}$ by weights gets determined. Further, if $\cE_P(\mathfrak p)_{*}$ has the same slope as $\cE^{I}(\gfr)_*$, then its underlying degree gets determined as well. Thus, the set of input data $(d,w_x|x \in \cR)$ for Gromov-Witten numbers of sub-bundles that could potentially violate stabilty gets determined. These input data also determine the underlying degree of the quotient bundle $\cE^{I}(\gfr)_*/\cE_P(\mathfrak p)_{*}$. Now,   the condition ``no $P$-reduction $\cE^{\tt I}_{_P}$ is of the {\sl minus 1 type}"  can be formulated in terms of vanishing of Gromov-Witten numbers.
\end{proof}
\begin{rem} The above elementary deformation arguments can be used with the Harder-Narasimhan parabolic reduction to cut down on the input data of Gromov-Witten numbers for $\Delta^{ss}$.  \end{rem}


\bibliographystyle{plain}

\bibliography{stability}
 \end{document}